\newtheorem{thm}{Theorem}
\newtheorem{lem}{Lemma}
\theoremstyle{remark}
\newtheorem*{rmrk}{Remark}
\definecolor{comment}{RGB}{65,104,23}
\newcommand{\pert}[1] {\delta_{#1}\hspace{-0.5pt}}
\newcommand{\Lie}[1]{\mathcal{L}_{#1}}
\newcommand{\I}{{\mspace{1mu}\mathrm{I}}} 
\newcommand{\at}{\makeatletter @\makeatother}
\renewcommand{\maketitle}{\bgroup\setlength{\parindent}{0pt}
\begin{flushleft}
  \textbf{\Large\@title}\par
	\vspace{.3cm}
  \@author\par
\noindent\rule[0.5ex]{\linewidth}{0.5pt}
\end{flushleft}\egroup
}
\title{On the well-posedness of Galbrun's equation}
\author{\underline{{LINUS H{\"A}GG} and MARTIN BERGGREN}\\
Department of Computing Science,
Ume{\aa} University, SE--901 87 Ume{\aa}, Sweden.\\
\{linush, martin.berggren\}\raisebox{-1pt}{{\at}}cs.umu.se\\
\today}
\begin{document}
\thispagestyle{empty}
\maketitle

\abstract{Galbrun's equation, which is a second order partial differential equation describing the evolution of a so-called Lagrangian displacement vector field, can be used to study acoustics in background flows as well as perturbations of astrophysical flows.      
Our starting point for deriving Galbrun's equation is linearized Euler's equations, which is a first order system of partial differential equations that describe the evolution of the so-called Eulerian flow perturbations. 
Given a solution to linearized Euler's equations, we introduce the Lagrangian displacement as the solution to a linear first order partial differential equation, driven by the Eulerian perturbation of the fluid velocity.
Our Lagrangian displacement solves Galbrun's equation, provided it is regular enough and that the so-called ``no resonance'' assumption holds. 
In the case that the background flow is steady and tangential to the domain boundary, we prove existence, uniqueness, and continuous dependence on data of solutions to an initial--boundary value problem for linearized Euler's equations. 
For such background flows, we demonstrate that the Lagrangian displacement is well-defined, that the initial datum of the Lagrangian displacement can be chosen in order to fulfill the ``no resonance'' assumption, and derive a classical energy estimate for (sufficiently regular solutions to) Galbrun's equation.
Due to the presence of zeroth order terms of indefinite signs in the equations, the energy estimate allows solutions that grow exponentially with time.}

\section{Introduction}
The linearized Euler's equations constitute a standard model for propagation of sound in a background flow. 
What appears to be less known is that the linearized Euler's equations can be reduced to a vector ``wave'' equation in the Lagrangian displacement, that is, the displacement of individual fluid particles. (The precise definition of the Lagrangian displacement will be given later.) 
The resulting equation is often referred to as Galbrun's equation in the literature, in honor of Henri Galbrun who first derived the equation in 1931~\cite{Ga31}[Chapter~3]. 
Since that first account, Galbrun's equation (or at least very similar equations) has been independently rediscovered and investigated a multiple of times~\cite{FrRo60,Ha70,Go97,LyOs67, FrSc78a, FrSc78b, DySc79}, with applications in acoustics and astrophysics.

The linearized Euler's equations are derived from Euler's equations by using an \emph{Eulerian} linearization ansatz~\cite{Ga03}. 
Analogously, Galbrun's equation may be derived by using a \emph{Lagrangian} linearization ansatz~\cite{Ga03}.
However, we will present a complementary derivation, for homentropic background flows, of Galbrun's equation  that does not rely on Lagrangian perturbations.  
One of many possible formulations of Galbrun's equation reads~\cite{Ga03,Br11,Fe13} (note that the last two references assume that $\varphi_0 = \pert{}\varphi = 0$)
\begin{equation}
\rho_0D_0^2w-\nabla(\rho_0c_0^2\nabla\cdot w)+(\nabla p_0)\nabla\cdot w - (\nabla w)^T\nabla p_0 -\rho_0(w\cdot\nabla)\varphi_0 = \rho_0\,\pert{}\varphi,
\label{eq:GStandardForm}
\end{equation}
where the vector field $w$ denotes the Lagrangian displacement; $u_0$, $p_0$, $\rho_0$, and $c_0$ the fluid velocity, pressure, density, and speed of sound fields of the background flow, respectively; $\varphi_0$ the volume force density acting on the background fluid; $\pert{}\varphi$ a volume force density; and $D_0 = \partial_t + u_0\cdot\nabla$ the material derivative with respect to $u_0$.
Apart from reducing the number of unknowns and equations to be solved, it is pointed out in the literature that Galbrun's equation allows natural handling of boundary conditions, since the primary unknown is the Lagrangian displacement~\cite{Go97, MeBoMiPePe12}. 
Moreover, Galbrun's equation~\eqref{eq:GStandardForm} may be derived using the Euler--Lagrange formalism and thus allows formulation of a wave energy balance law~\cite{Ha70,FrSc78a,Go97,Br11}. 

\subsection{Previous works on the well-posedness of Galbrun's equation} 
Naive finite element discretizations of the time harmonic counterpart of Galbrun's equation~\eqref{eq:GStandardForm} for steady background flows are known to yield poor numerical results~\cite{TrGaBe03,Le03}. 
The situation here appears similar to the issues of locking in linear elasticity and approximation of the curl-curl operator in electrodynamics. 

In a sequence of papers that consider increasingly complicated background flows, a regularized formulation of Galbrun's equation has been proposed to resolve the numerical issues for subsonic background flows~\cite{BoLeLu01,BoLeLu03,BoDuLeMe07,BoMeMiPe10,BoMeMiPePe12}.
The idea behind the regularization is to take advantage of the identity $-\Delta w = -\nabla(\nabla\cdot w) + \nabla\times(\nabla\times w)$.
For a homogenous background flow, the time harmonic Galbrun's equation at angular frequency $\omega$ reads
\begin{equation}
\hat{D}_0^2 \hat{w} - c_0^2\nabla(\nabla\cdot \hat{w})  = \pert{}\hat{\varphi},
\label{eq:GSimple}
\end{equation}
where $\hat{D}_0 = i\omega + u_0\cdot\nabla$, $w(x,t) = \hat{w}(x)\exp{i\omega t}$, and $\pert{}\varphi(x,t) = \pert{}\hat{\varphi}(x)\exp{i\omega t}$.
The regularized formulation of equation~\eqref{eq:GSimple} is constructed by adding $c_0^2\nabla\times(\nabla\times \hat{w} - \psi)$ to the left hand side of the equation.  
We note that if $\psi = \nabla\times \hat{w}$, the added term vanishes and the time harmonic Galbrun's equation~\eqref{eq:GSimple} is retrieved.
The regularized equation is coupled with an equation for $\psi$,
\begin{equation}
\hat{D}_0^2\psi - \underbrace{[\hat{D}_0^2,\nabla\times]\hat{w}}_{=\,0} = \nabla\times \pert{}\hat{\varphi},
\label{eq:curlG}
\end{equation}   
which is obtained by applying the curl operator to the time harmonic Galbrun's equation~\eqref{eq:GSimple} and in the end replacing $\nabla\times w$ with $\psi$.
Although the commutator term $[(i\omega+u_0\cdot\nabla)^2,\nabla\times]\hat{w}$ in equation~\eqref{eq:curlG} vanishes for a homogeneous background flow, we note that it would be present for more complicated background flows.
In order to reconcile the regularized formulation with the original formulation, it is required that $\psi = \nabla\times \hat{w}$ on the boundary of the domain. 
It turns out that the regularized formulation of time harmonic Galbrun's equation, with perfectly matched layers to handle artificial boundaries, is well-posed in two spatial dimensions under relatively mild assumptions on the background flow~\cite{BoMeMiPePe12}.
Nevertheless, a recent numerical study~\cite{ChDu18} reported lack of convergence of the numerical solution in the case of a heterogeneous background flow; interestingly no such convergence issues were observed when solving linearized Euler's equations.   
 
An alternative to the regularized time-harmonic formulation that has been used to generate numerical solutions~\cite{TrGaBe03}, is based on a mixed variational formulation of the system
\begin{subequations}
\begin{align}
\rho_0\hat{D}_0^2\hat{w}+\nabla(\pert{L}\hat{p})+(\nabla p_0)\nabla\cdot \hat{w} - (\nabla \hat{w})^T\nabla p_0 -\rho_0(\hat{w}\cdot\nabla)\varphi_0  &= \rho_0\,\pert{}\hat{\varphi}\\
\pert{L}\hat{p} + \rho_0c_0^2 \nabla\cdot \hat{w} & = 0,
\end{align}
\label{eq:mixed}%
\end{subequations}
where $\pert{L}p(x,t) = \pert{L}\hat{p}(x)\exp{i\omega t}$ denotes the Lagrangian pressure perturbation (a precise definition of Lagrangian perturbations are given in the next section). 
To the best of our knowledge, well-posedness of formulation~\eqref{eq:mixed} has not been established.
  
The case of general time dependence appears to have received less attention in the literature than its harmonic counterpart.
For homogeneous background flows, a regularized formulation, analogous to that used in the time-harmonic case, is known to be well-posed in two spatial dimensions~\cite{Be06,BoBeJo06,BeBoJo06}.
Similarly as in the time-harmonic case, numerical experiments demonstrate that naive discretizations yield poor approximations~\cite{Be06,BoBeJo06,BeBoJo06}.  
We note that the system formulation, analogous to formulation~\eqref{eq:mixed}, has also been studied for general time dependence~\cite{Fe13,FeBeBa16}. 
To the best of our knowledge, well-posedness of such formulation has not been proven.          
		
\section{Derivation of Galbrun's equation from Euler's equations}
\label{sec:derivation}
{\noindent}In this section, we derive Galbrun's equation from Euler's equations via the linearized Euler's equations. We consider an inviscid fluid that either undergoes homentropic flow (the entropy is constant in time and space) or is elastic (the equation of state is independent of the entropy).  The time evolution of the flow is assumed to be governed by Euler's equations
\begin{subequations}
\begin{align}
	\rho Du + \nabla p  &= \rho\varphi,\label{eq:Eu1}\\
	D\rho+\rho\nabla\cdot u &= 0,\label{eq:Eu2}\\
	p &= \Sigma(\rho),\label{eq:Eu3}
\end{align}
\label{eq:Eu}%
\end{subequations}
where $u$, $p$, $\rho$, and $\varphi$ denote the fluid velocity, pressure, density, and volume force density fields, respectively, and $D = \partial_t + u\cdot\nabla$ the material derivative. 
Equations~\eqref{eq:Eu1} and~\eqref{eq:Eu2} express conservation of momentum and mass, respectively, while relation~\eqref{eq:Eu3} is called the (homentropic) equation of state, which upon differentiation gives the speed of sound $c = \sqrt{\Sigma'(\rho)}$.  

With the intention to study the evolution of small perturbations of the flow, we introduce the linearization ansatz 
\begin{equation}
\phi(x,t) = \phi_0 (x,t)+\pert{}\phi(x,t),
\label{eq:pertEu}
\end{equation}
where $\phi(x,t)$ denotes a generic flow field, $\phi_0(x,t)$ is given, and $\pert{}\phi(x,t)$ denotes the so-called Eulerian perturbation.
As before, the given fields $u_0, p_0, \rho_0$, and $\varphi_0$ are termed the background flow, and we require that they themselves satisfy Euler's equations, that is,  
\begin{subequations}
\begin{align}
	\rho_0 D_0u_0 + \nabla p_0  &= \rho_0\varphi_0,\label{eq:Eu01}\\
	D_0\rho_0+\rho_0\nabla\cdot u_0 &= 0,\label{eq:Eu02}\\
	p_0 &= \Sigma(\rho_0).\label{eq:Eu03}
\end{align}
\label{eq:Eu0}%
\end{subequations} 
Substituting the linearization ansatz~\eqref{eq:pertEu} into Euler's equations~\eqref{eq:Eu} and retaining terms that are at most linear in the perturbation, we obtain the linearized Euler's equations
{\begin{subequations}
\begin{align}
\rho_0{D}_0\,\pert{}u + \nabla\pert{}p + \rho_0(\pert{}u\cdot\nabla){u_0}-\frac{\nabla p_0}{\rho_0}\pert{}\rho &= \rho_0\,\pert{}{\varphi},\label{eq:ElE1}\\
{D}_0\,\pert{}\rho +\rho_0\nabla\cdot\pert{}u + (\pert{}u\cdot\nabla)\rho_0+(\nabla\cdot u_0)\pert{}\rho&= 0,\label{eq:ElE2}\\
\pert{}p &= c_0^2\,\pert{}\rho,\label{eq:ElE3}
\end{align}
\label{eq:ElE}%
\end{subequations}}
which describe the evolution of Eulerian perturbations.

Informally, the Lagrangian displacement $w$ is the displacement of individual fluid particles, as illustrated in Figure~\ref{fig:w}.
A more precise definition of the Lagrangian displacement as the displacement of individual fluid particles is given in Appendix~\ref{sec:LagDisp}.   
\begin{figure}[ht!!!]%
\centering
\includegraphics[trim={10mm 0mm 0mm 20mm},clip]{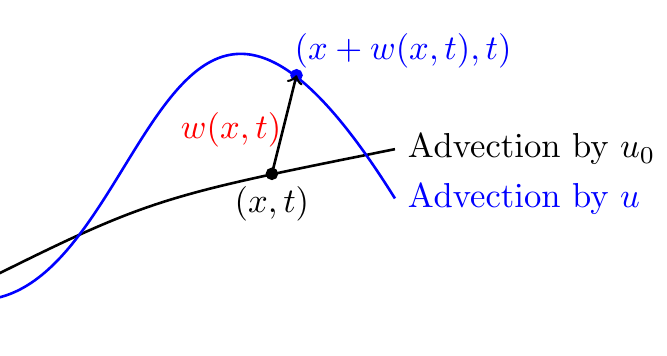}%
\caption{The Lagrangian displacement $w$.}%
\label{fig:w}%
\end{figure}
As detailed by Gabard~\cite{Ga03}, Galbrun's equation may be derived through a Lagrangian linearization of Euler's equations~\eqref{eq:Eu}, that is, linearizing the equations using the ansatz $\phi(x,t) = \phi_0(x,t) + \pert{L}\phi(x,t)$, where the Lagrangian perturbation is given by 
\begin{equation}
\pert{L}\phi(x,t) = \pert{}\phi(x,t) + (w(x,t)\cdot\nabla)\phi_0(x,t) \approx \phi(x+w(x,t),t) - \phi_0(x,t).
\label{eq:dL}
\end{equation}
We will, however, derive Galbrun's equation directly from linearized Euler's equations~\eqref{eq:ElE}.
Given $\pert{}u$ from linearized Euler's equations~\eqref{eq:ElE}, we introduce the Lagrangian displacement $w$ abstractly by the relation
\begin{equation}
\underbrace{(\partial_t + \mathcal{L}_{u_0}){w}}_{=D_0w-(w\cdot\nabla)u_0} \!\!= \pert{}u,\label{eq:dEuRelw}
\end{equation}
where $\Lie{u_0}w = (u_0\cdot\nabla)w-(w\cdot\nabla)u_0$ denotes the Lie derivative of $w$ along $u_0$.
As will be discussed in the sequel, suitable initial and boundary conditions need to be supplied to equation~\eqref{eq:dEuRelw} in order to make $w$ well-defined.   
That $w$ satisfying relation~\eqref{eq:dEuRelw} indeed qualifies as a Lagrangian displacement is motivated in Appendix~\ref{sec:LagDisp}.

The usefulness of definition~\eqref{eq:dEuRelw} stems from the identity
\begin{equation}
\nabla\cdot(\rho_0\,\pert{}u) = \rho_0D_0\left(\frac{1}{\rho_0}\nabla\cdot(\rho_0 w)\right)\!\!,
\label{eq:divdEu}
\end{equation} 
which we now demonstrate. 
By product rule~\eqref{eq:LieProductScalarAndVector}, we obtain from definition~\eqref{eq:dEuRelw} that
\begin{equation}
\rho_0\,\pert{}u = \rho_0(\partial_t + \Lie{u_0})w = (\partial_t + \Lie{u_0})\rho_0w-w(\partial_t+\Lie{u_0})\rho_0 = (\partial_t + \Lie{u_0})\rho_0w-\rho_0w\frac{D_0\rho_0}{\rho_0}.
\label{eq:le1_NEW1}
\end{equation}
Applying the divergence to relation~\eqref{eq:le1_NEW1} and using identity~\eqref{eq:divLieuv} yields  
\begin{align}
\nabla\cdot(\rho_0\,\pert{}u) &=\nabla\cdot\left((\partial_t + \Lie{u_0})\rho_0w-\rho_0w\frac{D_0\rho_0}{\rho_0}\right)\nonumber\\ 
&= (\partial_t + \Lie{u_0})(\nabla\cdot(\rho_0 w)) - \Lie{\rho_0 w}\nabla\cdot u_0 -(\nabla\cdot(\rho_0 w))\frac{D_0\rho_0}{\rho_0}-(\rho_0w\cdot\nabla)\frac{D_0\rho_0}{\rho_0}\nonumber\\
&= D_0(\nabla\cdot(\rho_0 w)) -(\nabla\cdot(\rho_0 w))\frac{D_0\rho_0}{\rho_0} - (\rho_0w\cdot\nabla)\left(\frac{D_0\rho_0}{\rho_0}+\nabla\cdot u_0\right).
\label{eq:le1_NEW2}
\end{align}
The last term in expression~\eqref{eq:le1_NEW2} vanishes due to mass conservation~\eqref{eq:Eu02}, while, using that $-\rho_0^{-1}D_0\rho_0 = \rho_0 D_0(\rho_0^{-1})$, the first two terms may be combined to form the right-hand side of identity~\eqref{eq:divdEu}.   

We will now use identity~\eqref{eq:divdEu} to rewrite equation~\eqref{eq:ElE2}. 
To that end, we note that the second and third terms in equation~\eqref{eq:ElE2} combine to $\nabla\cdot(\rho_0\,\pert{}u)$, while the sum of the first and fourth terms can be rewritten as
\begin{align}
D_0\,\pert{}\rho + (\nabla\cdot u_0)\pert{}\rho &= D_0\left(\rho_0\frac{\pert{}\rho}{\rho_0}\right)+ \rho_0(\nabla\cdot u_0)\frac{\pert{}\rho}{\rho_0} = \rho_0D_0\left(\frac{\pert{}\rho}{\rho_0}\right) + \left(D_0\rho_0+\rho_0\nabla\cdot u_0\right)\frac{\pert{}\rho}{\rho_0}\nonumber\\
&= \rho_0D_0\left(\frac{\pert{}\rho}{\rho_0}\right),  
\label{eq:2}
\end{align}
where we in the last step have used mass conservation~\eqref{eq:Eu02}.
Thus, by identity~\eqref{eq:divdEu} equation~\eqref{eq:ElE2} is equivalent to 
\begin{equation}
\rho_0D_0\left(\frac{\pert{}\rho+\nabla\cdot(\rho_0w)}{\rho_0}\right) = 0.
\label{eq:LlE2Euler}
\end{equation}
\begin{rmrk}Note that since $\pert{}\rho + \nabla\cdot(\rho_0w) = \pert{}\rho + (w\cdot\nabla)\rho_0 + \rho_0\nabla\cdot w$ and by definition~\eqref{eq:dL}, we obtain from expression~\eqref{eq:LlE2Euler} that
\begin{equation}
\rho_0D_0\left(\frac{\pert{L}\rho}{\rho_0}+\nabla\cdot w\right) = 0,
\label{eq:LlE2}
\end{equation}  
which is the \emph{Lagrangian} linearization of~\eqref{eq:Eu2}. (Compare with equation~(58) in Gabard~\cite{Ga03}.)
\qed
\end{rmrk}
Tentatively assuming that equation~\eqref{eq:LlE2Euler} implies that
\begin{equation}
\pert{}\rho + \nabla\cdot(\rho_0w) = 0,
\label{eq:noResIndErho}
\end{equation}
and eliminating $\pert{}u,\pert{}\rho$, and $\pert{}p$ from equation~\eqref{eq:ElE1}, using relations~\eqref{eq:ElE3},~\eqref{eq:dEuRelw} and~\eqref{eq:noResIndErho}, we finally attain Galbrun's equation
\begin{equation}
\rho_0D_0(D_0w - (w\cdot\nabla)u_0) -\nabla(c_0^2\nabla\cdot(\rho_0w)) +\rho_0\left((D_0w - (w\cdot\nabla)u_0)\cdot\nabla\right)u_0+\frac{\nabla p_0}{\rho_0}\nabla\cdot(\rho_0w) = \rho_0\pert{}\varphi. 
\label{eq:G}
\end{equation}
Formulation~\eqref{eq:G} of Galbrun's equation has a different form compared to the standard formulation naturally obtained via Lagrangian linearization~\eqref{eq:GStandardForm},
\begin{equation}
\rho_0D_0^2w-\nabla(\rho_0c_0^2\nabla\cdot w)+(\nabla p_0)\nabla\cdot w - (\nabla w)^T\nabla p_0 -\rho_0(w\cdot\nabla)\varphi_0 = \rho_0\,\pert{}\varphi.
\tag{v.s.~\eqref{eq:GStandardForm}}
\end{equation}
Nevertheless, a lengthy direct calculation yields that formulations~\eqref{eq:G} and~\eqref{eq:GStandardForm} are equivalent. 

The above presentation deliberately exposes a potential weak link in the derivation of Galbrun's equation, namely the transition from equations~\eqref{eq:LlE2Euler} to~\eqref{eq:noResIndErho}.  
This transition is often referred to as the ``no resonance'' assumption in the literature---a terminology introduced by Godin~\cite{Go97}---and it will be further analyzed in the sequel.

\section{Preliminaries}
\label{sec:preliminaries}
Before continuing the investigations into the well-posedness of Galbrun's equation, we briefly recall an abstract framework for time dependent Friedrichs' systems that will be extensively used to assess well-posedness of various initial--boundary value problems that are closely related to Galbrun's equation.  
The time dependent framework that we rely on was recently presented by Burazin and Erceg~\cite{BuEr16} and constitutes an extension of a framework for time independent Friedrich's systems presented by Ern et al.~\cite{ErGuCa07}.

We use the notation from the article by Ern et al.~\cite{ErGuCa07}. 
Let $L$ be a Hilbert space with inner product $(\cdot,\cdot)_L$ and norm $\|\cdot\|_L$,  and  $\mathcal{D}$ a dense subspace of $L$. 
Moreover, $L$ is identified with its dual $L'$.
Let $T:\mathcal{D}\to L$ and $\tilde{T}:\mathcal{D}\to L$ be two linear operators that satisfy
\begin{align}
&(T\phi,\psi)_L = (\phi,\tilde{T}\psi)_L~\text{for all $\phi,\psi\in \mathcal{D}$,}\label{eq:T1}\tag{T1}\\
&\|(T+\tilde{T})\phi\|_L \lesssim\|\phi\|_L~\text{for all $\phi\in\mathcal{D}$.}\label{eq:T2}\tag{T2}
\end{align}
By $W_0$, we denote the completion of $\mathcal{D}$ in the inner product $(\cdot,\cdot)_L + (T\cdot,T\cdot)_L$ (or equivalently in the inner product $(\cdot,\cdot)_L + (\tilde{T}\cdot,\tilde{T}\cdot)_L$). 
As detailed by Antoni{\'c} and Burazin~\cite{AnBu10}, the operators $T$ and $\tilde{T}$ can be extended, first by density and then by adjoints, to bounded operators from $L$ to $W_0'$.
Abusing the notation, we still denote these extensions $T,\tilde{T}\in\mathcal{L}(L;W_0')$.
The graph space $W = \{\xi\in L\mid T\xi\in L\}~(~= \{\xi\in L\mid \tilde{T}\xi\in L\})$ is a Hilbert space when equipped with the graph inner product $(\cdot,\cdot)_W = (\cdot,\cdot)_L + (T\cdot,T\cdot)_L$.

The boundary operator $\mathfrak{D}\in\mathcal{L}(W;W')$ is defined for $\xi,\tilde{\xi}\in W$ through
\begin{equation}
\langle \mathfrak{D}\xi,\tilde{\xi} \rangle_W = (T\xi,\tilde{\xi})_L - (\xi,\tilde{T}\tilde{\xi})_L = \langle \mathfrak{D}\tilde{\xi},\xi\rangle_W.
\label{eq:D}
\end{equation}
Let $V$ and $\tilde{V}$ be subspaces of $W$ that satisfy the conditions
\begin{align}
\langle \mathfrak{D}\xi,\xi\rangle_W\geq 0~\text{for all }\xi\in V~&\text{and }\langle \mathfrak{D}\tilde{\xi},\tilde{\xi}\rangle_W\leq 0~\text{for all }\tilde{\xi}\in \tilde{V},
\label{eq:V1}\tag{V1}\\
V = \mathfrak{D}(\tilde{V})^0~&\text{and }\tilde{V} = \mathfrak{D}(V)^0\label{eq:V2}\tag{V2},
\end{align} 
where $\mathfrak{D}(V)^0 = \{\tilde{\xi} \in W\mid \langle \mathfrak{D}\xi,\tilde\xi\rangle_W = 0\,\forall\xi\in V\}$ denotes the annihilator of $\mathfrak{D}(V)$, and $\mathfrak{D}(\tilde{V})^0$ denotes the annihilator of $\mathfrak{D}(\tilde{V})$.

The abstract Cauchy problem related to the operator $T$ is given by
\begin{subequations}
	\begin{align}
		(\partial_t + T)\xi &= f~\text{for }t \in (0,\tau),\label{eq:abstractCauchyEq}\\
		\xi &= \xi_\I~\text{at }t = 0\label{eq:abstractCauchyIC},
	\end{align}
	\label{eq:abstractCauchy}
\end{subequations}
where $\xi:[0,\tau)\to L$ for some $\tau\in(0,\infty)$, $f:(0,\tau)\to L$ and $\xi_\I\in L$.
We define $\lambda_0$ by
\begin{equation}
2\lambda_0 = \max\left\{0,-\inf_{\phi\in \mathcal{D}\setminus \{0\}}\frac{((T+\tilde{T})\phi,\phi)_L}{\|\phi\|_L^2}\right\}\leq \sup_{\phi\in \mathcal{D}\setminus \{0\}}\frac{\|(T+\tilde{T})\phi\|_L}{\|\phi\|_L} <\infty,
\label{eq:lambda0}
\end{equation}
where the last inequality follows from assumption~\eqref{eq:T2}.
Then, the operator $\mathcal{A}_{\lambda_0}: V\subset L\to L$, $\mathcal{A}_{\lambda_0} = -(T + \lambda_0 I)\rvert_{V}$ is the infinitesimal generator of a contraction $C_0$-semigrouop $(S_{\lambda_0}(t))_{t\geq 0}$ \cite{BuEr16}[Theorem 2], from which it follows that the abstract Cauchy problem~\eqref{eq:abstractCauchy} is uniquely solvable~\cite{BuEr16}[Corollary 1].
\begin{thm}
\label{th:gen}
Let $T$ and $\tilde{T}$ satisfy conditions~\eqref{eq:T1} and~\eqref{eq:T2}, let $V\subset W$ and $\tilde{V}\subset W$ satisfy conditions~\eqref{eq:V1} and~\eqref{eq:V2}, and let $f\in L^1((0,\tau);L)$.
Then, for every $\xi_\I\in L$, problem~\eqref{eq:abstractCauchy} has a unique mild solution $\xi\in C([0,\tau];L)$ given by 
\begin{equation}
\xi(t) = e^{\lambda_0 t}S_{\lambda_0}(t)\xi_\I + \int\limits_{0}^{t}e^{{\lambda_0} (t-s)}S_{\lambda_0}(t-s)f(s)\,\mathrm{d}s,
\label{eq:mildSolution}
\end{equation} 
where $\lambda_0$ is given by expression~\eqref{eq:lambda0}.  
\end{thm}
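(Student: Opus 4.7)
The plan is to cast problem~(\ref{eq:abstractCauchy}) as an inhomogeneous abstract evolution equation whose homogeneous part is governed by the generator of a $C_0$-semigroup, and then to appeal to the standard variation-of-constants formula. The substantive input is the Burazin--Erceg generation theorem \cite{BuEr16}[Theorem 2], which under hypotheses~(T1)--(T2) and~(V1)--(V2) guarantees that $\mathcal A_{\lambda_0} = -(T+\lambda_0 I)|_V$ is the infinitesimal generator of a contraction $C_0$-semigroup $(S_{\lambda_0}(t))_{t\geq 0}$ on $L$. My proof quotes this result rather than re-deriving it.

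Granted the semigroup, I would rescale by setting $\eta(t) = e^{-\lambda_0 t}\xi(t)$. A short calculation shows that~(\ref{eq:abstractCauchyEq})--(\ref{eq:abstractCauchyIC}) is equivalent to
\begin{equation*}
\partial_t \eta \;=\; \mathcal A_{\lambda_0}\eta + e^{-\lambda_0 t}f,\qquad \eta(0)=\xi_\I.
\end{equation*}
Because $f\in L^1((0,\tau);L)$ implies $e^{-\lambda_0 \cdot}f\in L^1((0,\tau);L)$, the classical theory of mild solutions for $C_0$-semigroups delivers a unique $\eta\in C([0,\tau];L)$ given by the Duhamel representation
\begin{equation*}
\eta(t) \;=\; S_{\lambda_0}(t)\xi_\I + \int_0^t S_{\lambda_0}(t-s)\,e^{-\lambda_0 s}f(s)\,\mathrm{d}s.
\end{equation*}
Multiplying by $e^{\lambda_0 t}$ recovers exactly~(\ref{eq:mildSolution}); uniqueness and $L$-valued continuity are inherited from strong continuity of $S_{\lambda_0}$ and Bochner integrability of the forcing.

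The only genuinely nontrivial step is the generation of the semigroup, which amounts to maximal dissipativity of $-\mathcal A_{\lambda_0}$. Here the identity $(T\phi,\phi)_L = \tfrac12((T+\tilde T)\phi,\phi)_L + \tfrac12\langle\mathfrak D\phi,\phi\rangle_W$, combined with the definition of $\lambda_0$ in~(\ref{eq:lambda0}) and~(V1), yields the dissipativity estimate on $V$; the range condition for $-\mathcal A_{\lambda_0} + \mu I$ with $\mu>0$ is obtained by the analogous dissipativity of the formal adjoint $-(\tilde T + \lambda_0 I)|_{\tilde V}$ together with the duality built into~(V2), and Lumer--Phillips then closes the argument. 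This is precisely the content of \cite{BuEr16}[Theorem 2], and it is where the Friedrichs-systems machinery does all the technical work; once it is granted, Theorem~\ref{th:gen} follows from essentially automatic manipulations, with the main obstacle being purely bookkeeping of the exponential shift.
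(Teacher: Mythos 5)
Your proposal is correct and follows essentially the same route as the paper: the paper likewise takes the Burazin--Erceg generation result \cite{BuEr16}[Theorem 2] as the substantive input and then invokes \cite{BuEr16}[Corollary 1] for the unique mild solution, which is exactly the exponential-shift-plus-Duhamel argument you spell out. Your explicit rescaling $\eta(t)=e^{-\lambda_0 t}\xi(t)$ and the sketch of the Lumer--Phillips mechanism behind the generation theorem are consistent with, and merely more detailed than, what the paper states.
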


Finally, we note that formula~\eqref{eq:mildSolution} yields the estimate
\begin{equation}
\|\xi(t)\|_L \leq e^{\lambda_0\tau}\left(\|\xi_\I\|_L + \int\limits_0^t{\|f(s)\|_L\,\mathrm{d}s} \right)\!\!, 
\label{eq:enEst}
\end{equation}  
which shows that the solution depends continuously on data.

\subsection{Notations}
By $(\cdot,\cdot)$ we denote the standard $L^2(\Omega)^k$ inner product, that is,
\begin{equation}
(u,v) := \int\limits_{\Omega} {u^T\!v} = \int\limits_{\Omega} \sum_{i = 1}^k{u_i v_i}.
\label{eq:L2Innerproduct}
\end{equation}
Analogously, $(\cdot,\cdot)_{\partial\Omega}$ denotes the standard $L^2(\partial\Omega)^k$ inner product, that is,
\begin{equation}
(u,v)_{\partial\Omega} := \int\limits_{\partial\Omega} {u^T\!v} = \int\limits_{\partial\Omega} \sum_{i = 1}^k{u_i v_i}.
\label{eq:L2BndInnerproduct}
\end{equation}

Let $A,B$ be normed spaces with norms $\|\cdot\|_A,\|\cdot\|_B$. We write $\|u_A\|_A \lesssim \|u_B\|_B$ if there exists a $C>0$ independent of $u_B$ such that $\|u_A\|_A \leq C \|u_B\|_B$. 

\section{Existence of solutions to Galbrun's equation}
\label{sec:existence} 
In this section, we present a scheme to generate solutions to Galbrun's equation~\eqref{eq:G} from solutions to linearized Euler's equations~\eqref{eq:ElE}.
The idea is that if $\pert{}u$ and $\pert{}\rho$ are solutions to linearized Euler's equations~\eqref{eq:ElE}, then the Lagrangian displacement $w$ may be found by solving equation~\eqref{eq:dEuRelw}. Moreover, provided that the ``no resonance'' assumption is satisfied, then this $w$ is a solution to Galbrun's equation~\eqref{eq:G}.
To validate such a scheme, we investigate
\begin{itemize}
	\item existence of solutions to linearized Euler's equations~\eqref{eq:ElE},
	\item existence of solutions to equation~\eqref{eq:dEuRelw}, and
	\item conditions that guarantee fulfillment of the ``no resonance'' assumption.   
\end{itemize}   

\subsection{Dissecting the ``no resonance'' assumption}
\label{sec:noResAss}
If we introduce the quantity 
\begin{equation}
h := \rho_0^{-1}(\pert{}\rho + \nabla\cdot(\rho_0 w)),
\label{eq:h}
\end{equation} 
the ``no resonance'' assumption takes the form
\begin{equation}
\rho_0D_0 h = 0 \implies h = 0.
\label{eq:noResInh}
\end{equation} 
Anticipating that the initial value problem (or depending on the situation, the initial--boundary value problem) that corresponds to the equation to the left of the implication~\eqref{eq:noResInh} is well-posed, the desired implication would follow if we could provide vanishing data for $h$.
While pursuing this idea, we will reveal that the ``no resonance'' assumption in some cases imposes a restriction on the Lagrangian displacement $w$ only, and not on the Eulerian perturbations $\pert{}\rho$ and $\pert{}u$.

Assume that $\Omega\subset\mathbb{R}^d$ is open, bounded, connected, and lies locally on one side of its Lipschitz boundary $\partial\Omega$.
By $n$ we denote the outward unit normal field on $\partial\Omega$.
We partition the boundary into three disjoint parts depending on the sign of $n\cdot u_0$
	\begin{equation}
		\Gamma_- = \{x\in\partial\Omega\mid n\cdot u_0 < 0\},
		\Gamma_0 = \{x\in\partial\Omega\mid n\cdot u_0 = 0\},~\text{and }
		\Gamma_+ = \{x\in\partial\Omega\mid n\cdot u_0 > 0\},	\label{eq:partdOmega}	
	\end{equation}
and we assume that $u_0$ is such that this partition does not vary with time and $\mathrm{dist}(\Gamma_-,\Gamma_+)>0$.

Unless otherwise stated, we assume that the background flow is steady and that the background flow quantities are Lipschitz continuous in $\bar{\Omega}$. Thus the background flow quantities are bounded in $\bar{\Omega}$ and have bounded first order spatial derivatives almost everywhere in $\bar{\Omega}$.
Moreover, we assume that the density and the speed of sound are bounded away from zero, that is,  
\begin{equation}
\begin{aligned}
\inf\limits_{\bar{\Omega}}\rho_0 & =:  \underline{\rho_0} > 0,\\
\inf\limits_{\bar{\Omega}} c_0 & =: \underline{c_0} > 0.
\end{aligned}
\label{eq:rho0Andc0Positive}
\end{equation}

The initial--boundary value problem related to the ``no resonance'' assumption reads
\begin{subequations}
\begin{align}   
\rho_0D_0 h &= 0&~\text{in $\Omega$ for all $t\in(0,\tau)$,}\label{eq:noResEq}\\
h &= h_\I&~\text{in $\Omega$ at $t = 0$,}\label{eq:noResIC}\\
h &= h_-&~\text{on $\Gamma_-$ for all $t\in(0,\tau)$},
\label{eq:noResBC}
\end{align}
\label{eq:noResIVBP}%
\end{subequations}
where $0<\tau<\infty$.
\begin{thm} 
\label{th:noResWellPo} 
Assume that the background flow is steady.
If $h_\I\in L^2(\Omega)$ and $h_- =0$, then the initial--boundary value problem~\eqref{eq:noResIVBP} is well-posed.
\end{thm}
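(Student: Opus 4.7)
My plan is to cast problem~\eqref{eq:noResIVBP} into the abstract Friedrichs framework of Section~\ref{sec:preliminaries} and invoke Theorem~\ref{th:gen}. Since $\rho_0 \geq \underline{\rho_0} > 0$ by~\eqref{eq:rho0Andc0Positive}, the equation $\rho_0 D_0 h = 0$ is equivalent to $(\partial_t + T)h = 0$ with $T\phi := u_0 \cdot \nabla \phi$. I would take $L = L^2(\Omega)$, $\mathcal{D} = C_c^\infty(\Omega)$, and as formal adjoint $\tilde{T}\phi := -u_0 \cdot \nabla \phi - (\nabla \cdot u_0)\phi$. Condition~\eqref{eq:T1} is just integration by parts on compactly supported test functions, and~\eqref{eq:T2} follows from $(T+\tilde{T})\phi = -(\nabla\cdot u_0)\phi$ together with the essential boundedness of $\nabla \cdot u_0$ inherited from the standing Lipschitz regularity of $u_0$. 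The graph space is $W = \{h \in L^2(\Omega) : u_0 \cdot \nabla h \in L^2(\Omega)\}$, and a density argument from smooth $h, \tilde{h}$ identifies the boundary operator as
\begin{equation*}
\langle \mathfrak{D}h, \tilde{h} \rangle_W = \int_{\partial\Omega} (n \cdot u_0)\, h\, \tilde{h},
\end{equation*}
interpreted as a duality pairing with the natural weighted boundary space.

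The obvious candidates for the admissible subspaces, reflecting the inflow boundary condition $h_- = 0$, are
\begin{equation*}
V = \{h \in W : h|_{\Gamma_-} = 0\}, \qquad \tilde{V} = \{h \in W : h|_{\Gamma_+} = 0\}.
\end{equation*}
Condition~\eqref{eq:V1} is then immediate from the sign conventions in~\eqref{eq:partdOmega}: for $h \in V$,
\begin{equation*}
\langle \mathfrak{D}h, h \rangle_W = \int_{\Gamma_+} (n \cdot u_0) h^2 \geq 0,
\end{equation*}
and symmetrically $\langle \mathfrak{D}\tilde{h}, \tilde{h} \rangle_W \leq 0$ for $\tilde{h} \in \tilde{V}$. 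The main obstacle will be verifying~\eqref{eq:V2}, i.e., the annihilator identities $V = \mathfrak{D}(\tilde{V})^0$ and $\tilde{V} = \mathfrak{D}(V)^0$. The difficulty is that elements of $W$ carry only the directional derivative $u_0 \cdot \nabla h$ in $L^2$, so the boundary trace is not furnished directly by Sobolev theory and must be built via Green's identity into the weighted space $L^2(\partial\Omega; |n \cdot u_0|)$. The separation hypothesis $\mathrm{dist}(\Gamma_-, \Gamma_+) > 0$ is what makes this construction succeed: it permits independent localisation near $\Gamma_-$ and $\Gamma_+$ via a smooth cutoff, and reduces the verification of~\eqref{eq:V2} to a density/lifting argument on each inflow and outflow piece separately.

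With all four abstract hypotheses in place, problem~\eqref{eq:noResIVBP} with $h_- = 0$ and $h_\I \in L^2(\Omega)$ becomes precisely the abstract Cauchy problem~\eqref{eq:abstractCauchy} with $\xi = h$, $\xi_\I = h_\I$, and $f = 0$. Theorem~\ref{th:gen} then yields a unique mild solution $h \in C([0,\tau]; L^2(\Omega))$, and the estimate~\eqref{eq:enEst} supplies the continuous dependence on $h_\I$, completing the proof of well-posedness.
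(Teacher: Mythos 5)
Your proposal is correct and follows essentially the same route as the paper: both cast problem~\eqref{eq:noResIVBP} as an abstract Friedrichs system with $T = u_0\cdot\nabla$, take $V$ and $\tilde{V}$ to be the graph-space functions whose traces vanish on $\Gamma_-$ and $\Gamma_+$ respectively (using $\mathrm{dist}(\Gamma_-,\Gamma_+)>0$ to obtain the trace into $L^2_{|n\cdot u_0|}(\partial\Omega)$ and the annihilator conditions~\eqref{eq:V2}), and conclude via Theorem~\ref{th:gen} and estimate~\eqref{eq:enEst}. The only difference is that the paper works in the $\rho_0$-weighted space $L^2_{\rho_0}(\Omega)$, where steady mass conservation $\nabla\cdot(\rho_0 u_0)=0$ gives $\tilde{T}=-T$ and hence $\lambda_0=0$, whereas your unweighted choice leaves the bounded multiplication $-(\nabla\cdot u_0)$ in $T+\tilde{T}$ (so $\lambda_0$ may be positive); since the two norms are equivalent by~\eqref{eq:rho0Andc0Positive}, this is immaterial for well-posedness.
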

\begin{proof}
With notation as in Section~\ref{sec:preliminaries}, we let $L = L_{\rho_0}^2(\Omega)$ with inner product $(\cdot,\cdot)_L = (\rho_0\cdot,\cdot)$.
By assumption, $0<\inf_{\bar{\Omega}} \rho_0 \leq \sup_{\bar{\Omega}} \rho_0 < \infty$, which implies that $L$ is topologically equivalent to $L^2(\Omega)$. 
Therefore, in this case, we let ${C}_0^\infty(\Omega)$ serve as the dense set $\mathcal{D}\subset L$ described in Section~\ref{sec:preliminaries}. 
In equation~\eqref{eq:noResEq}, we find $T = u_0\cdot\nabla$. 
By mass conservation~\eqref{eq:Eu02} it holds that $\nabla\cdot(\rho_0u_0) = 0$, which implies that the formal adjoint of $T$ in $L$ is $\tilde{T} = -u_0\cdot\nabla = -T$.
Thus, $T$ and $\tilde{T}$ satisfy conditions~\eqref{eq:T1} and~\eqref{eq:T2}, and definition~\eqref{eq:lambda0} yields $\lambda_0 = 0$, since $T+\tilde{T} \equiv 0$. 
Due to the assumption that $\mathrm{dist}(\Gamma_-,\Gamma_+)>0$, there is a continuous trace operator $\gamma:W\to L^2_{|n\cdot\rho_0u_0|}(\partial\Omega)$, where $W = \{h\in L\mid u_0\cdot \nabla h \in L\}$~\cite{ErGu06a}[Lemma 3.1]. 
Let $V = \{h\in W\mid (\gamma h)\rvert_{\Gamma_-} = 0\}$ and $\tilde{V} = \{h\in W\mid (\gamma h)\rvert_{\Gamma_+} = 0\}$, then conditions~\eqref{eq:V1} and~\eqref{eq:V2} are satisfied~\cite{ErGuCa07}[Lemma 5.2].   
Hence, Theorem~\ref{th:gen} and estimate~\eqref{eq:enEst} apply.  
\end{proof} 

\begin{rmrk}Note that a similar estimate to estimate~\eqref{eq:enEst} can be derived directly for problem~\eqref{eq:noResIVBP}, even for unsteady background flows as long as the partitioning~\eqref{eq:partdOmega} does not vary with time.
Formally, multiplying equation~\eqref{eq:noResEq} by $h$,  integrating over $\Omega$, using integration-by-parts formula~\eqref{eq:ip3} and invoking boundary condition~\eqref{eq:noResBC}, we find that
\begin{equation}
\frac{d}{dt}(\rho_0h, h) = \int\limits_{\Gamma_-}\rho_0|n\cdot u_0|h^2 - \int\limits_{\Gamma_+}\rho_0|n\cdot u_0|h^2\leq \int\limits_{\Gamma_-}\rho_0|n\cdot u_0|h_-^2.
\label{eq:noResPreEst}
\end{equation}
Integrating inequality~\eqref{eq:noResPreEst} over the time interval $(0,t)$ and invoking initial condition~\eqref{eq:noResIC}, we obtain the estimate
\begin{equation}
(\rho_0 h(t), h(t))  \leq (\rho_0 h_\I, h_\I) + \int\limits_0^t\int\limits_{\Gamma_-}\rho_0|n\cdot u_0|h_-^2. 
\label{eq:noResEst}
\end{equation}
\qed
\end{rmrk}

In either case, Theorem~\ref{th:noResWellPo} or estimate~\eqref{eq:noResEst} show that if $h_\I = h_- = 0$, then the ``no resonance'' assumption~\eqref{eq:noResInh} follows. 
The issue is whether vanishing data for $h$ in problem~\eqref{eq:noResIVBP} can be provided for $h$ defined as in expression~\eqref{eq:h}.
To that end, assume that $\pert{}u$, $\pert{}\rho$ satisfy the linearized Euler's equations~\eqref{eq:ElE} with suitable initial and boundary conditions, and that the initial datum $\pert{}\rho_\I$ of $\pert{}\rho$ belongs to $L^2(\Omega)$.
We start by investigating the initial condition
\begin{equation}
0= h\rvert_{t = 0} = (\pert{}\rho + \nabla\cdot(\rho_0 w))\rvert_{t = 0} = \pert{}\rho\rvert_{t = 0} + \nabla\cdot(\rho_0 w\rvert_{t = 0})~\text{in }\Omega.
\label{eq:noResHomIC}
\end{equation}
Equation~\eqref{eq:noResHomIC} requires that the initial datum $w_\I$ of the Lagrangian displacement satisfies $\nabla\cdot(\rho_0 w_\I) = -\pert{}\rho_\I$ in $\Omega$, which can be achieved by defining $w_\I = \rho_0^{-1}\nabla v_\I$, where $v_\I\in H^1_0(\Omega)$ is the solution to $-\Delta v_\I = \pert{}\rho_\I$ in $\Omega$.
Thus, using $w\rvert_{t = 0} = w_\I$ as the initial condition for the Lagrangian displacement, we obtain that $h\rvert_{t = 0} = 0$.
For the case when the background flow is everywhere tangential to $\partial\Omega$---$\partial\Omega = \Gamma_0$ and $\Gamma_- = \Gamma_+ = \emptyset$---no boundary condition is needed, and the ``no resonance'' assumption holds if and only if $h\rvert_{t = 0} = 0$, which can be achieved by adjusting the initial datum of $w$ as demonstrated above.
We note that, in this particular case, the ``no resonance'' assumption imposes no restriction on the initial datum $\pert{}\rho_\I\in L^2(\Omega)$ of $\pert{}\rho$.
However, when the background flow is \emph{not} everywhere tangential to $\partial\Omega$, imposing homogeneous data for $h$ at the boundary part $\Gamma_-$ appears unfortunately to be difficult.
Indeed, we would like to impose
\begin{equation}
0 = h\rvert_{\Gamma_-} = (\pert{}\rho + \nabla\cdot(\rho_0 w))\rvert_{\Gamma_-} = \pert{}\rho\rvert_{\Gamma_-} + (\nabla\cdot(\rho_0 w))\rvert_{\Gamma_-}.
\label{eq:noResHomBC}
\end{equation}
Contrary to condition~\eqref{eq:noResHomIC} that directly translates into a condition on the initial datum of $w$, it is not possible to convert expression~\eqref{eq:noResHomBC} into a condition on the boundary datum of $w$ on $\Gamma_-$.  

\subsection{Well-posedness of linearized Euler's equations}
\label{sec:ElE_WP}
In this section we prove well-posedness of an initial--boundary value problem for linearized Euler's equations for steady background flows on a bounded domain by employing the framework for abstract Friedrichs' systems that was briefly recalled in Section~\ref{sec:preliminaries}.
Following Kreiss and Lorenz~\cite{KrLo89}[Chapter~8.3], we introduce the scaled quantity 
\begin{equation}
\pert{}\hat{\rho} = c_0\frac{\pert{}\rho}{\rho_0}.
\label{eq:dHatRho}
\end{equation}
Then the linearized Euler's equations~\eqref{eq:ElE} may be rewritten in the form
\begingroup
\renewcommand*{\arraystretch}{1.8}
\begin{equation}
\left(\rho_0\partial_t +
\begin{pmatrix}
	\rho_0u_0\cdot\nabla & \nabla(\rho_0c_0 \cdot)\\
	\rho_0c_0\nabla\cdot & \rho_0u_0\cdot\nabla
\end{pmatrix}
+\rho_0c_0
\begin{pmatrix}
	\dfrac{\nabla u_0}{c_0} & -\dfrac{\nabla\rho_0}{\rho_0}\\
	\dfrac{\nabla\rho_0}{\rho_0}\cdot & -\dfrac{D_0c_0}{c_0^2}
\end{pmatrix}
\right)
\begin{pmatrix}
	\pert{}u\\
	\pert{}\hat{\rho}
\end{pmatrix}
= \rho_0
\begin{pmatrix}
	\pert{}\varphi\\
	0
\end{pmatrix}.
\label{eq:scaledElE}
\end{equation}
\endgroup

Similarly as in Section~\ref{sec:noResAss}, we assume that $\Omega\subset\mathbb{R}^d$ is open, bounded, connected, and lies locally on one side of its boundary $\partial\Omega$, that the background flow is steady and that the background flow quantities are Lipschitz continuous in $\bar{\Omega}$, and that the density and the speed of sound are bounded away from zero~\eqref{eq:rho0Andc0Positive}. 
In addition we assume that $\partial\Omega$ is $C^1$-regular with a Lipschitz continuous unit normal vector field $n$, and that the background flow is everywhere tangential to $\partial\Omega$.   
Recall from Section~\ref{sec:noResAss} that, since $n\cdot u_0 = 0$ on $\partial\Omega$, the ``no resonance'' assumption can be enforced by appropriately choosing the initial datum for the Lagrangian displacement.

To form an initial--boundary value problem, we supply to equation~\eqref{eq:scaledElE} the initial and boundary conditions 
\begin{align}
(\pert{}u,\pert{}\hat{\rho}) &= (\pert{}u_\I,\pert{}\hat{\rho}_\I)&\text{in $\Omega$ at $t = 0$,}\\
-n\cdot\pert{}u + Y\pert{}\hat{\rho} &= 0&\text{on $\partial\Omega$ for all $t\in(0,\tau)$,}
\label{eq:lEuBC}
\end{align}
where $Y:\partial\Omega \to [0,\infty)$ is a Lipschitz continuous (dimensionless) admittance function, and $0<\tau<\infty$.
The spatially variable admittance function $Y$ allows for the interpolation between the boundary conditions $n\cdot \pert{}u = 0$ and $\pert{}\hat{\rho} -n\cdot\pert{}u = 0$.  
The former holds at an impenetrable wall and the latter can be used as an artificial boundary condition to truncate an unbounded domain. 
Albeit not exactly representable, the boundary condition $\pert{}\hat{\rho} = 0$, which would correspond to $Y=\infty$, can be approximately enforced by choosing $Y$ to be large.
We introduce $\xi = (\xi_1,\xi_2) = (\pert{}u,\pert{}\hat{\rho})$, $\xi_\I = (\xi_{1,\I},\xi_{2,\I}) = (\pert{}u_\I,\pert{}\hat{\rho}_\I)$, $f = (\pert{}\varphi,0)$, and the block operators $T,A$, and $B$ defined by
\begin{equation}
\begingroup
\renewcommand*{\arraystretch}{1.8}
\rho_0T = A + B = 
\begin{pmatrix}
	\rho_0u_0\cdot\nabla & \nabla(\rho_0c_0 \cdot)\\
	\rho_0c_0\nabla\cdot & \rho_0u_0\cdot\nabla
\end{pmatrix}
+\rho_0c_0
\begin{pmatrix}
	\dfrac{\nabla u_0}{c_0} & -\dfrac{\nabla\rho_0}{\rho_0}\\
	\dfrac{\nabla\rho_0}{\rho_0}\cdot & -\dfrac{D_0c_0}{c_0^2}
\end{pmatrix}.
\label{eq:T}
\endgroup
\end{equation}
Moreover, introducing the Lipschitz continuous unit vector field $e=(-n,Y)/|(-n,Y)|$ on the boundary allows us to compactly state initial--boundary value problem~\eqref{eq:scaledElE}--\eqref{eq:lEuBC} as
\begin{subequations}
	\begin{align}
		(\partial_t + T)\xi &= f~&\text{in $\Omega$ for all $t\in(0,\tau)$,}\label{eq:IVP_ElEEQ}\\
		\xi &= \xi_\I~&\text{ in $\Omega$ at $t = 0,$}\label{eq:IVP_ElEIC}\\
		\sqrt{1+Y^2}e^T\xi &= 0~&\text{on $\partial\Omega$ for all $t\in(0,\tau)$.}\label{eq:IVP_ElEBC}
	\end{align}
	\label{eq:IVP_ElE}%
\end{subequations}

In principle, nonzero boundary data can also be handled in problem~\eqref{eq:IVP_ElE}.
Formally, a problem in $\xi$ with \emph{nonzero} boundary datum $g$ transforms to a problem in $(\xi-\tilde{g})$ with \emph{zero} boundary datum by the splitting $\xi = (\xi-\tilde{g}) + \tilde{g}$, where $\tilde{g}$ is a kind of extension of $g$ to $\Omega\times (0,\tau)$ that satisfies $\sqrt{1+Y^2}e^T\tilde{g} = g$ on $\partial\Omega$.

With notation as in Section~\ref{sec:preliminaries}, we let $L = L_{\rho_0}^2(\Omega)^{d+1}$ with inner product $(\cdot,\cdot)_{L} = (\rho_0 \cdot, \cdot)$. 
Since  $0<\inf_{\bar{\Omega}}\rho_0 \leq \sup_{\bar{\Omega}}\rho_0 < \infty$, $L$ is topologically equivalent to $L^2(\Omega)^{d+1}$, and $\mathcal{D} = {C}_{0}^{\infty}(\Omega)^{d+1}$ is dense in $L$. 
Let $T$ be defined as in relation~\eqref{eq:T} and let $\rho_0\tilde{T} = -A + B^T$. Then conditions~\eqref{eq:T1} and~\eqref{eq:T2} in Section~\ref{sec:preliminaries} are satisfied; that is, $\tilde{T}$ is the formal adjoint of $T$ in $L$, and $T + \tilde{T} = \rho_0^{-1}(B + B^T)$ is a bounded operator on $L$. 
We define the graph space $W = \{\xi\in L\mid T\xi \in L\} = \{\xi\in L\mid \tilde{T}\xi \in L\}$, which is a Hilbert space in the graph norm
\begin{align}
\|\xi\|_W^2 &= \tau_0^{-2}\|\rho_0\xi\|^2 + \|A\xi\|^2\nonumber\\  &=\tau_0^{-2}\|\rho_0\xi\|^2 + \|(\rho_0u_0\cdot\nabla)\xi_1 + \nabla(\rho_0c_0\xi_2)\|^2 + \|\rho_0c_0\nabla\cdot\xi_1 + \rho_0u_0\cdot{\nabla}\xi_2\|^2,
\label{eq:normW}
\end{align}
where the time scale $\tau_0 > 0$ has been included to make the terms dimensionally consistent.   
\begin{rmrk}
Unless $\xi$ is regular enough, the individual terms in the last two norms might not be well-defined---analogously as the $\partial_1 u_1$ term of $\nabla\cdot u = \partial_1 u_1 + \partial_2 u_2+ \partial_3 u_3$ might not be well-defined for $u\in H_{\mathrm{div}}(\Omega)$.
Moreover, since we have assumed that $0<\inf_{\bar{\Omega}} \rho_0\leq \sup_{\bar{\Omega}} \rho_0 < \infty$, and since $B$ is a bounded operator on $L^2(\Omega)^{d+1}$, $\|\cdot\|_W$ in expression~\eqref{eq:normW} is equivalent to the ``standard'' graph norm $\sqrt{\|\tau_0^{-1}\cdot\|_L^2 + \|T\cdot\|_L^2}$.
\qed
\end{rmrk}

The boundary operator $\mathfrak{D}: W\to W'$ is given by
\begin{equation}
\langle \mathfrak{D}\xi,\tilde{\xi} \rangle_W := (T\xi, \tilde{\xi})_L - (\xi,\tilde{T}\tilde{\xi})_L = (A\xi,\tilde{\xi}) + (\xi,A\tilde{\xi}),
\label{eq:DlE}
\end{equation}
and we note that for $\phi,\psi \in C^{1}(\bar{\Omega})^{d+1}$, it has the representation
\begin{equation}
\left\langle \mathfrak{D}\phi,\psi \right\rangle_W = (A\phi,\psi) + (\phi,A\psi) =\int\limits_{\partial\Omega}\psi^T A(n)\phi = \int\limits_{\partial\Omega}\rho_0c_0(n\cdot \phi_1\psi_2 + n\cdot\psi_1\phi_2),
\label{eq:DSmoothFunctions}
\end{equation}
where 
\begin{equation}
A(n) = \rho_0c_0\begin{pmatrix} 0_{3\times 3} & n\\ n\cdot & 0 \end{pmatrix},
\label{eq:An}
\end{equation}
since $n\cdot u_0 = 0$ on $\partial\Omega$.

We will now proceed to define a trace operator for functions in $W$ that provides a sound mathematical treatment of boundary condition~\eqref{eq:IVP_ElEBC}.
Our approach is based on the work of Rauch~\cite{Ra85}, who investigates initial--boundary value problems with \emph{vector valued} boundary conditions that are characterized using quotient spaces on the boundary. 
Here, we present a more elementary, nevertheless equivalent, characterization for the \emph{scalar valued} boundary condition~\eqref{eq:IVP_ElEBC}.    

For $\xi\in C^{1}(\bar{\Omega})^{d+1}$, we define a linear operation $\gamma_e \xi = e^T\xi\rvert_{\partial\Omega}$.
Observe that $e^T = b^TA(n)$, where $A(n)$ is defined by expression~\eqref{eq:An} and $b$ is a Lipschitz continuous vector field on $\partial\Omega$ ($b = (Yn,-1)/(\rho_0c_0\sqrt{1+Y^2})$).
For any $v\in H^{1/2}(\partial\Omega)$, we obtain by integration-by-parts formula~\eqref{eq:DSmoothFunctions} that
\begin{equation}
\int\limits_{\partial\Omega}(\gamma_e\xi)v =\int\limits_{\partial\Omega}(e^T\xi)v = \int\limits_{\partial\Omega}(b^TA(n)\xi)v = \int\limits_{\partial\Omega}(vb)^TA(n)\xi = \int\limits_{\Omega}(\gamma_0^\ast(vb))^T A\xi + \int\limits_{\Omega}(A\gamma_0^\ast(vb))^T \xi,
\label{eq:tr2}
\end{equation}
where $\gamma_0^*: H^{1/2}(\partial\Omega)^{d+1}\to H^1(\Omega)^{d+1}$ denotes a (bounded linear) right inverse of the standard trace operator $\gamma_0:H^{1}(\Omega)^{d+1}\to H^{1/2}(\partial\Omega)^{d+1}$, and where we have used that $vb\in H^{1/2}(\partial\Omega)^{d+1}$ since $b$ is Lipschitz continuous.
From relation~\eqref{eq:tr2} we obtain the estimate
\begin{equation}
\begin{aligned}
\left|\,\int\limits_{\partial\Omega}(\gamma_e\xi)v\right| &\lesssim \|\gamma_0^\ast(vb)\|_{H^1(\Omega)^{d+1}}\|\xi\|_W\lesssim \|vb\|_{H^{1/2}(\partial\Omega)^{d+1}}\|\xi\|_W\lesssim \|v\|_{H^{1/2}(\partial\Omega)}\|\xi\|_W,
\end{aligned}
\label{eq:tr3}
\end{equation} 
from which it follows that $\|\gamma_e \xi\|_{H^{-1/2}(\partial\Omega)}\lesssim \|\xi\|_W$.
Since $C^1(\bar{\Omega})^{d+1}$ is dense in $W$~\cite{Ra85}[Proposition 1], the operator $\gamma_e$ extends uniquely to a bounded linear operator $W \to H^{-1/2}(\partial\Omega)$, which we still denote by $\gamma_e$.
We say that $\xi\in W$ satisfies boundary condition~\eqref{eq:IVP_ElEBC} if and only if $\gamma_e \xi = 0$ and define $V = \mathrm{ker}\,\gamma_e\subset W$.

To analyze initial--boundary value problem~\eqref{eq:IVP_ElE}, we also need to consider functions that satisfy the adjoint boundary condition
\begin{equation}
\sqrt{1+Y^2}\tilde{e}^T\xi = 0~\text{on $\partial\Omega$ for all $t\in(0,\tau)$},\label{eq:adjointBC}
\end{equation}
where $\tilde{e} = (n,Y)/|(n,Y)|$.
Since $\tilde{e}^T = \tilde{b}^TA(n)$, where $\tilde{b}$ is a Lipschitz continuous vector field on $\partial\Omega$ ($\tilde{b} = (Yn,1)/(\rho_0c_0\sqrt{1+Y^2})$), we
may proceed analogously as in the case of $e$ to define the trace operator $\gamma_{\tilde{e}}: W\to H^{-1/2}(\partial\Omega)$ and the subspace $\tilde{V} = \mathrm{ker}\, \gamma_{\tilde{e}}$.
The following lemma expresses the fundamental geometric relation between boundary conditions~\eqref{eq:IVP_ElEBC} and~\eqref{eq:adjointBC}.
\begin{lem}
\label{lem:basicAnOrth}
For each $x\in \partial\Omega$ define $N(x) = \{\xi\in \mathbb{R}^{d+1}\mid e(x)^T\xi = 0\}$ and $\tilde{N}(x) = \{\tilde{\xi}\in \mathbb{R}^{d+1}\mid \tilde{e}(x)^T\tilde{\xi} = 0\}$, then 
\begin{align}
\tilde{N}(x) &= (A(n(x))N(x))^\perp = \{\tilde{\xi}\in\mathbb{R}^{d+1}\mid \tilde{\xi}^T A(n(x))\xi = 0~\forall \xi\in N(x)\},\\ 
N(x) &= (A(n(x))\tilde{N}(x))^\perp = \{\xi\in\mathbb{R}^{d+1}\mid \xi^T A(n(x))\tilde{\xi} = 0~\forall \tilde{\xi}\in \tilde{N}(x)\},\\
\mathrm{ker}\,A(n(x))&\subset N(x)\cap \tilde{N}(x).\label{eq:kerAn}
\end{align}
\begin{proof}
From definition~\eqref{eq:An} it follows that
\begin{equation}
\tilde{\xi}^{\,T}A(n(x))\xi = \xi^{T}A(n(x))\tilde{\xi} = \rho_0c_0\left(\tilde{\xi_2}n(x)\cdot\xi_1+\xi_2n(x)\cdot\tilde{\xi_1}\right).
\label{eq:xiTildeAnXi}
\end{equation}
On the one hand, if $\xi\in N(x)$, then $0 = \sqrt{1+Y(x)^2}e(x)^T\xi = -n(x)\cdot \xi_1 + Y(x)\xi_2$, which implies that $n(x)\cdot\xi_1 = Y(x)\xi_2$.
On the other hand, if $\tilde{\xi}\in\tilde{N}(x)$, then $0 = \sqrt{1+Y(x)^2}\tilde{e}(x)^T\xi = n(x)\cdot \tilde{\xi}_1 + Y(x)\tilde{\xi}_2$, which implies that $n(x)\cdot\tilde{\xi}_1 = -Y(x)\tilde{\xi}_2$. It follows from expression~\eqref{eq:xiTildeAnXi} that $\tilde{\xi}^{\,T}A(n(x))\xi = \xi^{T}A(n(x))\tilde{\xi} = 0$ for any pair $\xi\in N(x)$ and $\tilde{\xi}\in \tilde{N}(x)$. Thus, $\tilde{N}(x) \subset (A(n(x))N(x))^\perp$ and $N(x) \subset (A(n(x))\tilde{N}(x))^\perp$. 
We now proceed to demonstrate the reverse inclusions.
To that end, let us first assume that $\tilde{\xi}\in\mathbb{R}^{d+1}$ is such that $\tilde{\xi}^TA(n(x))\xi = 0$ for any $\xi\in N(x)$.  
Using that $n(x)\cdot\xi_1 = Y(x)\xi_2$ in expression~\eqref{eq:xiTildeAnXi}, we find that
\begin{equation}
0 = \rho_0 c_0\,\left(n(x)\cdot\tilde{\xi}_1 + Y(x)\tilde{\xi}_2\right)\xi_2~\text{for all }\xi_2\in\mathbb{R},
\label{eq:orth1}
\end{equation}
from which it follows that $\tilde{e}(x)^T\tilde{\xi} = 0$, that is, $\tilde{\xi}\in \tilde{N}(x)$. 
We have thus demonstrated that $(A(n(x))N(x))^\perp \subset \tilde{N}(x)$ and note that the missing inclusion $(A(n(x))\tilde{N}(x))^\perp \subset N(x)$ can be demonstrated analogously. 
Finally, definition~\eqref{eq:An} implies that $\mathrm{ker}\,A(n(x)) = \{\xi\in\mathbb{R}^{d+1}\mid n(x)\cdot\xi_1 = 0~\text{and }\xi_2 = 0\}$, and thus inclusion~\eqref{eq:kerAn} holds.   
\end{proof}
\end{lem}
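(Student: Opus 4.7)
The lemma is essentially a linear-algebra computation, and my plan is to derive everything from the explicit block form of $A(n)$ given in~\eqref{eq:An}. First I would write out the bilinear form
\[
\tilde{\xi}^{\,T}A(n)\xi \;=\; \rho_0 c_0\bigl(\tilde{\xi}_2\, n\!\cdot\!\xi_1 \;+\; \xi_2\, n\!\cdot\!\tilde{\xi}_1\bigr),
\]
which is symmetric in $(\xi,\tilde{\xi})$ and will make it manifest that the two orthogonality statements are dual to one another. Then I would unpack the defining conditions: $\xi\in N(x)$ is equivalent to $n\!\cdot\!\xi_1=Y\xi_2$, and $\tilde{\xi}\in \tilde{N}(x)$ is equivalent to $n\!\cdot\!\tilde{\xi}_1=-Y\tilde{\xi}_2$, simply by expanding $e$ and $\tilde{e}$.

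Next I would verify the inclusion $\tilde{N}(x)\subset (A(n)N(x))^\perp$ by direct substitution: if $\xi\in N(x)$ and $\tilde{\xi}\in \tilde{N}(x)$, then $\tilde{\xi}^{\,T}A(n)\xi = \rho_0c_0\bigl(\tilde{\xi}_2 Y\xi_2 + \xi_2(-Y\tilde{\xi}_2)\bigr)=0$. For the reverse inclusion, I would fix an arbitrary $\tilde{\xi}\in (A(n)N(x))^\perp$ and restrict the bilinear form to $\xi\in N(x)$; the condition $n\!\cdot\!\xi_1=Y\xi_2$ collapses it to $\rho_0c_0\,\xi_2\bigl(Y\tilde{\xi}_2+n\!\cdot\!\tilde{\xi}_1\bigr)=0$. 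Since $N(x)$ contains elements with $\xi_2=1$ (take $\xi_1 = Yn$), the factor $Y\tilde{\xi}_2+n\!\cdot\!\tilde{\xi}_1$ must vanish, which is exactly $\tilde{\xi}\in \tilde{N}(x)$. The mirror inclusion $N(x)=(A(n)\tilde{N}(x))^\perp$ follows by the same argument with the roles of $\xi$ and $\tilde{\xi}$ (and the sign of $Y$) swapped, so I would simply invoke symmetry.

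For the kernel statement I would read off from the block form that $A(n)\xi=0$ is equivalent to $\xi_2=0$ and $n\!\cdot\!\xi_1=0$; both conditions $n\!\cdot\!\xi_1=Y\xi_2$ and $n\!\cdot\!\xi_1=-Y\tilde{\xi}_2$ are then trivially met, giving $\ker A(n(x))\subset N(x)\cap \tilde{N}(x)$.

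I do not anticipate any real obstacle: the entire lemma is bookkeeping on a $2\times 2$ block matrix with a single scalar parameter $Y\geq 0$. The only minor subtlety is ensuring that the admissible $\xi_2$ in $N(x)$ really does range over all of $\mathbb{R}$ (so that the factor $Y\tilde{\xi}_2+n\!\cdot\!\tilde{\xi}_1$ is forced to vanish rather than merely being orthogonal to a restricted set), but this is immediate because the tangential components of $\xi_1$ are unconstrained and $\xi_2$ can be chosen freely.
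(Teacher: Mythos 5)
Your proposal is correct and follows essentially the same route as the paper's own proof: expand the bilinear form $\tilde{\xi}^{\,T}A(n)\xi = \rho_0c_0(\tilde{\xi}_2\,n\cdot\xi_1+\xi_2\,n\cdot\tilde{\xi}_1)$, translate membership in $N(x)$ and $\tilde{N}(x)$ into the scalar conditions $n\cdot\xi_1=Y\xi_2$ and $n\cdot\tilde{\xi}_1=-Y\tilde{\xi}_2$, check the forward inclusions by substitution, obtain the reverse inclusions by letting $\xi_2$ range over $\mathbb{R}$, and read off the kernel from the block structure. Your explicit witness $\xi=(Yn,1)\in N(x)$ makes the "for all $\xi_2$" step slightly more concrete than the paper's phrasing, but the argument is the same.
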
  
\begin{rmrk}
Note the symmetry in the relationship between $N(x)$ and $\tilde{N}(x)$.
\qed 
\end{rmrk}   

The following theorem, which is due to Rauch~\cite{Ra85}[Theorem 4], establishes denseness of $C^1(\bar{\Omega})^{d+1}\cap V$ in $V$ and, by symmetry, denseness of $C^1(\bar{\Omega})^{d+1}\cap \tilde{V}$ in $\tilde{V}$.
\begin{thm}
\label{th:Ra85th4}
If the boundary is a characteristic surface of constant multiplicity, that is, the dimension of $\mathrm{ker} A(n)$ does not vary along the boundary, then $C^1(\bar{\Omega})^{d+1}\cap V$ is dense in $V.$
\end{thm}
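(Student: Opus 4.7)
The plan is a localization-and-mollification argument that approximates a given $\xi \in V$ by $C^1(\bar{\Omega})^{d+1}$ functions still lying in $V$ and converging in the graph norm $\|\cdot\|_W$. First, I would introduce a partition of unity subordinate to a covering of $\bar{\Omega}$ by one interior open set $U_0$ with $\bar{U}_0\subset\Omega$ and finitely many boundary patches $\{U_j\}_{j\geq 1}$. On each $U_j$ a $C^1$-diffeomorphism flattens $U_j\cap \partial\Omega$ to a piece of the hyperplane $\{x_d=0\}$ and maps $U_j\cap \Omega$ to a half-ball; pulling back the system yields a Friedrichs system on a half-space whose boundary symbol is still of constant multiplicity. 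This reduces the theorem to constructing approximants for functions supported in a single patch.

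In the interior patch, a standard Friedrichs mollifier $\xi_\varepsilon=\eta_\varepsilon\ast\xi$ gives smooth approximants that satisfy $\gamma_e\xi_\varepsilon=0$ vacuously, with convergence in the graph norm following from a commutator estimate (needed because the coefficients of $T$ are only Lipschitz). In a boundary patch, the appropriate tool is tangential mollification $\xi_\varepsilon(x',x_d)=\int\eta_\varepsilon(x'-y')\xi(y',x_d)\,\mathrm{d}y'$, which commutes with the trace on $\{x_d=0\}$ and therefore maps $V$ into $V$. Tangential mollification alone does not regularize in the normal direction, so it must be combined with a mechanism for smoothing in $x_d$ that does not destroy the boundary constraint.

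The constant-multiplicity hypothesis enters precisely here. Since $\dim\ker A(n)$ is constant along $\partial\Omega$, I would choose smooth local orthonormal frames for $\ker A(n(x))$ and its orthogonal complement and extend them smoothly into a tubular neighborhood of $\partial\Omega$. Relative to such a frame I would decompose $\xi=\xi^c+\xi^{nc}$, with $\xi^c$ taking values in $\ker A(n)$ on $\partial\Omega$ and $\xi^{nc}$ in its orthogonal complement. The inclusion $\ker A(n)\subset N$ from~\eqref{eq:kerAn} makes $e^T\xi^c=0$ automatic on $\partial\Omega$, so the boundary constraint reduces to $e^T\xi^{nc}=0$ on $\{x_d=0\}$. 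On $(\ker A(n))^\perp$ the symbol $A(n)$ is invertible, hence this reduced condition is non-degenerate and can be solved linearly for half of the components of $\xi^{nc}$ in terms of the rest. I would use this solvability to define a reflection-extension of $\xi^{nc}$ to $\{x_d<0\}$ that preserves the linear relation on $\{x_d=0\}$, after which a full mollifier yields $C^1$ approximants of $\xi^{nc}$ satisfying $\gamma_e\xi^{nc}_\varepsilon=0$. For $\xi^c$, no boundary constraint remains, so any $C^1$ extension followed by a full mollifier will do.

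The main obstacle I expect is the tension between the two competing requirements near $\partial\Omega$: a mollifier that smooths in all directions, so that the approximant is $C^1$ and $T\xi_\varepsilon\in L^2$, versus preservation of the trace constraint $e^T\xi=0$. The constant-multiplicity assumption is precisely what resolves this tension, by providing a smooth global splitting into a characteristic component, on which no boundary constraint is felt, and a non-characteristic component, on which the constraint is invertible and compatible with reflection. The remaining convergence $\|\xi_\varepsilon-\xi\|_W\to 0$ should then follow from the standard Friedrichs commutator lemma together with continuity of $\gamma_e:W\to H^{-1/2}(\partial\Omega)$ already established above.
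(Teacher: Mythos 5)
The paper does not actually prove this statement: it is imported verbatim from Rauch~\cite{Ra85}[Theorem 4], and the authors only verify its hypothesis (that $\mathrm{dim}\,\mathrm{ker}\,A(n) = d-1$ is constant along $\partial\Omega$). So the relevant comparison is with Rauch's original argument, whose overall architecture --- localization, boundary flattening, tangential mollification, and a splitting into characteristic and non-characteristic components enabled by the constant-multiplicity hypothesis --- your sketch reproduces correctly. The problem is that the two steps you treat as routine are exactly the ones that are not.

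First, the characteristic component. You assert that since $\mathrm{ker}\,A(n)\subset N$ imposes no boundary constraint on $\xi^c$, ``any $C^1$ extension followed by a full mollifier will do.'' But membership in the graph space $W$ controls only $A(n)\partial_n\xi$ plus tangential derivatives near the boundary; on $\mathrm{ker}\,A(n)$ the normal derivative is entirely uncontrolled, so $\xi^c$ is in general merely $L^2$ in the normal variable, need not admit any $C^1$ extension across $\{x_d=0\}$, and there is no reason a full mollification of an extension converges in $\|\cdot\|_W$. This is precisely what makes the characteristic case harder than the non-characteristic one and is where the substance of Rauch's proof lies: one mollifies the characteristic piece only tangentially, using that $T$ acts on it by tangential derivatives up to bounded terms so that tangential smoothing already converges in the graph norm, and recovers regularity in the normal direction by an inward translation combined with a diagonal argument. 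Second, tangential mollification does not exactly commute with multiplication by the variable field $e(x')$, so $\gamma_e\xi_\varepsilon$ is not zero but only small in $H^{-1/2}(\partial\Omega)$; to land exactly in $V$ you must correct the approximant by lifting this residual trace back into the domain, and it is in constructing that correction that the invertibility of $A(n)$ on $(\mathrm{ker}\,A(n))^\perp$ is actually used, not in a reflection. With these two repairs your plan essentially becomes Rauch's proof; without them it does not close.
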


\begin{rmrk}
Note that for $A(n)$ defined in expression~\eqref{eq:An}, $\mathrm{dim}\,\mathrm{ker} A(n) = d-1$, which implies that Theorem~\ref{th:Ra85th4} is indeed applicable.
Moreover, note that functions in $C^1(\bar{\Omega})^{d+1}\cap V$ satisfy boundary condition~\eqref{eq:IVP_ElEBC}  pointwise, while functions in $C^1(\bar{\Omega})^{d+1}\cap \tilde{V}$ satisfy adjoint boundary condition~\eqref{eq:adjointBC} pointwise.     
\qed
\end{rmrk}       

Using Theorem~\ref{th:Ra85th4}, we now demonstrate that boundary operator~\eqref{eq:DlE} is non-negative on $V$ and non-positive on $\tilde{V}$. 
For $\xi\in C^1(\bar{\Omega})^{d+1}\cap V$, integration-by-parts formula~\eqref{eq:DSmoothFunctions} with $\phi = \psi = \xi$ yields
\begin{equation}
\left\langle \mathfrak{D}\xi,\xi \right\rangle_W = (A\xi,\xi) + (\xi,A\xi) =\int\limits_{\partial\Omega}\xi^T A(n)\xi = 2\int\limits_{\partial\Omega}\rho_0c_0\,n\cdot\xi_1\,\xi_2 = 2\int\limits_{\partial\Omega}\rho_0c_0\,Y\,\xi_2^2, 
\label{eq:preV1a}
\end{equation} 
where we in the last step have employed boundary condition~\eqref{eq:IVP_ElEBC}.
Recalling that $Y\geq 0$ on $\partial\Omega$, we conclude by density that
\begin{equation}
\langle \mathfrak{D}\xi,\xi\rangle_W \geq 0~\text{for all } \xi \in V.
\label{eq:VSubCPlus}\tag{V1a}
\end{equation} 
Analogously, for $\tilde{\xi}\in C^1(\bar{\Omega})^{d+1}\cap \tilde{V}$, integration-by-parts formula~\eqref{eq:DSmoothFunctions} with $\phi = \psi = \tilde{\xi}$ yields
\begin{equation}
\langle \mathfrak{D}\tilde{\xi},\tilde{\xi} \rangle_W = (A\tilde{\xi},\tilde{\xi}) + (\tilde{\xi},A\tilde{\xi}) =\int\limits_{\partial\Omega}\tilde{\xi}^T A(n)\tilde{\xi} = 2\int\limits_{\partial\Omega}\rho_0c_0\,n\cdot\tilde{\xi}_1\,\tilde{\xi}_2 = -2\int\limits_{\partial\Omega}\rho_0c_0\,Y\,\tilde{\xi}_2^2, 
\label{eq:preV1b}
\end{equation} 
where we in the last step have employed the adjoint boundary condition~\eqref{eq:adjointBC}.
Recalling that $Y\geq 0$ on $\partial\Omega$, we conclude by density that
\begin{equation}
\langle \mathfrak{D}\tilde{\xi},\tilde{\xi}\rangle_W \leq 0~\text{for all } \xi \in \tilde{V}.
\label{eq:TildeVSubCMinus}\tag{V1b}
\end{equation} 
Conditions~\eqref{eq:VSubCPlus} and~\eqref{eq:TildeVSubCMinus} together form condition~\eqref{eq:V1} in Section~\ref{sec:preliminaries}. 

The following theorem establishes that the spaces $V$ and $\tilde{V}$ are ``orthogonal'' with respect to boundary operator~\eqref{eq:DlE}. 
\begin{thm}
\label{th:CorToRa85th4}
For $\xi\in V$ and $\tilde{\xi}\in\tilde{V}$, $\langle \mathfrak{D}\xi,\tilde{\xi} \rangle_W = \langle \mathfrak{D}\tilde{\xi},{\xi} \rangle_W = 0$.
\end{thm}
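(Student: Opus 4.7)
The plan is to reduce to smooth functions using the density result from Theorem~\ref{th:Ra85th4}, apply the pointwise integration-by-parts representation~\eqref{eq:DSmoothFunctions}, and then invoke the algebraic orthogonality from Lemma~\ref{lem:basicAnOrth} to conclude the boundary integrand vanishes.

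First, I would observe that by Theorem~\ref{th:Ra85th4} (applicable since $\dim\,\mathrm{ker}\,A(n) = d-1$ is constant along $\partial\Omega$) and its symmetric counterpart, both $C^1(\bar{\Omega})^{d+1}\cap V$ and $C^1(\bar{\Omega})^{d+1}\cap \tilde{V}$ are dense in $V$ and $\tilde{V}$ respectively, with the additional property noted in the remark that members of these smooth subspaces satisfy the respective boundary conditions \emph{pointwise}. Since $\mathfrak{D}\in\mathcal{L}(W;W')$, the sesquilinear form $(\xi,\tilde{\xi})\mapsto \langle\mathfrak{D}\xi,\tilde{\xi}\rangle_W$ is continuous on $W\times W$, so it suffices to establish the claim for smooth representatives $\phi\in C^1(\bar{\Omega})^{d+1}\cap V$ and $\psi\in C^1(\bar{\Omega})^{d+1}\cap \tilde{V}$.

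For such $\phi$ and $\psi$, integration-by-parts formula~\eqref{eq:DSmoothFunctions} yields
\begin{equation}
\langle \mathfrak{D}\phi,\psi\rangle_W = \int\limits_{\partial\Omega}\psi^T A(n)\phi.
\label{eq:planTh3_IBP}
\end{equation}
Pointwise on $\partial\Omega$, we have $\phi(x)\in N(x)$ because $\phi$ satisfies boundary condition~\eqref{eq:IVP_ElEBC} pointwise, and similarly $\psi(x)\in \tilde{N}(x)$ because $\psi$ satisfies the adjoint boundary condition~\eqref{eq:adjointBC} pointwise. By the first identity in Lemma~\ref{lem:basicAnOrth}, $\tilde{N}(x) = (A(n(x))N(x))^\perp$, which gives $\psi(x)^T A(n(x))\phi(x) = 0$ for every $x\in\partial\Omega$. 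Hence the right-hand side of~\eqref{eq:planTh3_IBP} vanishes, and by density $\langle \mathfrak{D}\xi,\tilde{\xi}\rangle_W = 0$ for all $\xi\in V$ and $\tilde{\xi}\in \tilde{V}$. The companion identity $\langle \mathfrak{D}\tilde{\xi},\xi\rangle_W = 0$ then follows immediately from the symmetry $\langle \mathfrak{D}\xi,\tilde{\xi}\rangle_W = \langle \mathfrak{D}\tilde{\xi},\xi\rangle_W$ recorded in definition~\eqref{eq:D}.

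The only real subtlety is the justification of passing to smooth functions: one needs Theorem~\ref{th:Ra85th4} applied on both sides and the continuity of $\mathfrak{D}$, but once those are in hand the argument reduces to the purely algebraic statement already packaged in Lemma~\ref{lem:basicAnOrth}. No delicate analytic estimate is required.
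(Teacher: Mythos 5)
Your proposal is correct and follows essentially the same route as the paper's own proof: reduce to $C^1(\bar{\Omega})^{d+1}$ representatives via Theorem~\ref{th:Ra85th4} and its symmetric analogue, apply the boundary representation~\eqref{eq:DSmoothFunctions}, and conclude via the pointwise orthogonality of Lemma~\ref{lem:basicAnOrth}. Your added remarks on the continuity of $\mathfrak{D}$ and the symmetry from definition~\eqref{eq:D} only make explicit what the paper leaves implicit.
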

\begin{proof}
By Theorem~\ref{th:Ra85th4} and its symmetric analogue, it is sufficient to establish the claim for $\xi\in C^1(\bar{\Omega})^{d+1}\cap V$ and $\tilde{\xi}\in C^1(\bar{\Omega}^{d+1})\cap \tilde{V}$. 
Therefore, for $\xi\in C^1(\bar{\Omega})^{d+1}\cap V$ and $\tilde{\xi}\in C^1(\bar{\Omega}^{d+1})\cap \tilde{V}$, integration-by-parts formula~\eqref{eq:DSmoothFunctions} with $\phi = \xi$ and $\psi = \tilde{\xi}$ yields  
\begin{equation}
\langle \mathfrak{D}{\xi},\tilde{\xi} \rangle_W = (A{\xi},\tilde{\xi}) + ({\xi},A\tilde{\xi}) =\int\limits_{\partial\Omega}{\xi}^T A(n)\tilde{\xi} = 0,
\label{eq:Duv0_1}
\end{equation}
where we in the last step employed Lemma~\ref{lem:basicAnOrth}.   
\end{proof}

Rauch demonstrates that if $\xi\in W$ satisfies 
\begin{equation}
(A\xi,\tilde{\xi}) + (\xi,A\tilde{\xi}) = 0~\text{for all $\xi\in Lip(\bar{\Omega})^{d+1}\cap \tilde{V}$,}
\label{eq:RaPrp3}
\end{equation}
then $\xi\in V$~\cite{Ra85}[Proposition 3]. 
Since $Lip(\bar{\Omega})^{d+1}\cap \tilde{V}\subset \tilde{V}$ an immediate consequence is the following property.
\begin{thm}
\label{th:CorToRa85Prp3}
Assume that $\xi\in W$.
If $\langle \mathfrak{D}\tilde{\xi},{\xi}\rangle_W = 0$ for all $\tilde{\xi}\in\tilde{V}$, then $\xi\in V$. 
\end{thm}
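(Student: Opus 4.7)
The plan is to recognize this as an almost immediate corollary of Rauch's Proposition~3, which is already invoked in the paragraph just above the statement. The key observation is that the hypothesis is actually a \emph{stronger} condition than the one required by Rauch: testing against all of $\tilde{V}$ certainly entails testing against the Lipschitz subset $\mathrm{Lip}(\bar{\Omega})^{d+1}\cap \tilde{V}$.

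More concretely, I would first unfold the boundary operator using its definition~\eqref{eq:DlE}, together with the symmetry observed there, to rewrite the hypothesis as
\begin{equation*}
(A\xi,\tilde{\xi}) + (\xi, A\tilde{\xi}) \;=\; \langle \mathfrak{D}\tilde{\xi},\xi\rangle_W \;=\; 0 \qquad \text{for all } \tilde{\xi}\in\tilde{V}.
\end{equation*}
Since $\mathrm{Lip}(\bar{\Omega})^{d+1}\cap \tilde{V}\subset\tilde{V}$, the same equality holds in particular for every $\tilde{\xi}\in\mathrm{Lip}(\bar{\Omega})^{d+1}\cap\tilde{V}$. This is precisely condition~\eqref{eq:RaPrp3}, so Rauch's Proposition~3 applies and yields $\xi\in V$.

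There is no real obstacle here: everything hinges on quoting Rauch's result correctly and matching our notation ($V,\tilde{V}$, operator $A$) to his setting, which has already been done in the preceding paragraph. The only thing worth double-checking is the symmetry $\langle \mathfrak{D}\tilde{\xi},\xi\rangle_W=\langle \mathfrak{D}\xi,\tilde{\xi}\rangle_W$ (so that the hypothesis, stated with $\xi$ on the right, indeed matches the form $(A\xi,\tilde{\xi})+(\xi,A\tilde{\xi})$ used by Rauch), which is manifest from~\eqref{eq:DlE}. Accordingly, the proof in the paper should be a two-line argument.
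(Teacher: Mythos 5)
Your proposal is correct and is essentially identical to the paper's argument: the theorem is stated as an immediate consequence of Rauch's Proposition~3 precisely because $\mathrm{Lip}(\bar{\Omega})^{d+1}\cap\tilde{V}\subset\tilde{V}$, so the hypothesis implies condition~\eqref{eq:RaPrp3}. Nothing further is needed.
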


Recall from Section~\ref{sec:preliminaries} the definition of annihilator $\mathfrak{D}(\tilde{V})^0 = \{\xi\in W\mid \langle \mathfrak{D}\tilde{\xi},\xi\rangle_W=0\,\forall\tilde{\xi}\in \tilde{V}\}$. 
Thus, Theorem~\ref{th:CorToRa85th4} implies that $V\subset \mathfrak{D}(\tilde{V})^{0}$, while Theorem~\ref{th:CorToRa85Prp3} implies that $\mathfrak{D}(\tilde{V})^{0}\subset V$, that is,
\begin{equation}
V = \mathfrak{D}(\tilde{V})^0.
\label{eq:V21}\tag{V2a}
\end{equation}
Similarly, $\mathfrak{D}(V)^0 = \{\tilde{\xi}\in  W\mid \langle \mathfrak{D}\xi,\tilde\xi\rangle_W=0\,\forall\xi\in V\}$.
Exploiting, once more, the symmetry in the relations of Lemma~\ref{lem:basicAnOrth}, we conclude by Theorem~\ref{th:CorToRa85th4} and the analogue of Theorem~\ref{th:CorToRa85Prp3} that
\begin{equation}
\tilde{V} = \mathfrak{D}(V)^0.
\label{eq:V22}\tag{V2b}
\end{equation} 
Conditions~\eqref{eq:V21} and~\eqref{eq:V22} combine to  condition~\eqref{eq:V2} in Section~\ref{sec:preliminaries}.
We have thus demonstrated that conditions~\eqref{eq:T1},~\eqref{eq:T2},~\eqref{eq:V1}, and~\eqref{eq:V2} are satisfied.
Thus, Theorem~\ref{th:gen} yields the following well-posedness result. 
\begin{thm}
\label{thm:linEulWP}
If $f \in L^1((0,\tau);L)$ and $\xi_\I\in L$, initial--boundary value problem~\eqref{eq:IVP_ElE} is well-posed in the sense of Theorem~\ref{th:gen}. 
\end{thm}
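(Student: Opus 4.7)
The plan is to verify that all the hypotheses of Theorem~\ref{th:gen} have already been established in the preceding discussion and then directly invoke it. In other words, the proof should amount to little more than a bookkeeping exercise that ties together the constructions of $L$, $\mathcal{D}$, $T$, $\tilde{T}$, $V$, $\tilde{V}$ and the four structural conditions~\eqref{eq:T1},~\eqref{eq:T2},~\eqref{eq:V1},~\eqref{eq:V2}.

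First, I would recall the Hilbert-space setup: $L = L^2_{\rho_0}(\Omega)^{d+1}$ with inner product $(\rho_0\cdot,\cdot)$, which is topologically equivalent to $L^2(\Omega)^{d+1}$ by the bounds~\eqref{eq:rho0Andc0Positive}, with dense subspace $\mathcal{D} = C_0^\infty(\Omega)^{d+1}$. The operator $T$ is defined by~\eqref{eq:T} and its formal adjoint by $\rho_0\tilde{T} = -A + B^T$. Condition~\eqref{eq:T1} was established by a direct integration-by-parts computation, and~\eqref{eq:T2} follows because $T + \tilde{T} = \rho_0^{-1}(B + B^T)$ is a bounded multiplication operator on $L$ (the background-flow quantities being Lipschitz in $\bar{\Omega}$).

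Second, I would invoke the subspaces $V = \ker \gamma_e$ and $\tilde{V} = \ker \gamma_{\tilde{e}}$ of the graph space $W$, constructed via the trace estimate~\eqref{eq:tr3}. Condition~\eqref{eq:V1} is the union of~\eqref{eq:VSubCPlus} and~\eqref{eq:TildeVSubCMinus}, which was verified by the boundary computations~\eqref{eq:preV1a}--\eqref{eq:preV1b} together with the density result of Rauch (Theorem~\ref{th:Ra85th4}); applicability of that theorem hinges on the observation that $\dim \ker A(n) = d-1$ is constant along $\partial\Omega$. Condition~\eqref{eq:V2}, split as~\eqref{eq:V21} and~\eqref{eq:V22}, was established by combining Theorem~\ref{th:CorToRa85th4} (giving $V \subset \mathfrak{D}(\tilde{V})^0$ and $\tilde{V}\subset \mathfrak{D}(V)^0$ via the orthogonality identities of Lemma~\ref{lem:basicAnOrth}) with Theorem~\ref{th:CorToRa85Prp3} (giving the reverse inclusions via Rauch's characterization~\eqref{eq:RaPrp3}).

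Third, since $f \in L^1((0,\tau);L)$ and $\xi_\I \in L$ by hypothesis, an immediate application of Theorem~\ref{th:gen} produces a unique mild solution $\xi \in C([0,\tau];L)$ given by formula~\eqref{eq:mildSolution}, and estimate~\eqref{eq:enEst} supplies continuous dependence on the data. The main obstacle in this argument is not in the final assembly---which is essentially a citation---but rather in the preceding machinery that makes the assembly possible: defining the trace $\gamma_e$ on all of $W$ despite the boundary being characteristic (so classical traces fail), and establishing both annihilator identities~\eqref{eq:V21},~\eqref{eq:V22}. Both of these rely critically on the tangency hypothesis $n\cdot u_0 = 0$ on $\partial\Omega$, which makes $A(n)$ take the skew block form~\eqref{eq:An} and in particular forces $\ker A(n) \subset N(x) \cap \tilde{N}(x)$, without which neither the density statement nor the geometric duality of Lemma~\ref{lem:basicAnOrth} would be available in the simple form used here.
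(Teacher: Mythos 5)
Your proposal is correct and follows essentially the same route as the paper: the paper's ``proof'' of Theorem~\ref{thm:linEulWP} is precisely the observation that conditions~\eqref{eq:T1},~\eqref{eq:T2},~\eqref{eq:V1}, and~\eqref{eq:V2} have all been verified in the preceding development (via the trace operators $\gamma_e$, $\gamma_{\tilde{e}}$, Rauch's density theorem, Lemma~\ref{lem:basicAnOrth}, and Theorems~\ref{th:CorToRa85th4} and~\ref{th:CorToRa85Prp3}), so that Theorem~\ref{th:gen} applies directly. Your identification of the tangency hypothesis $n\cdot u_0 = 0$ and the constant multiplicity $\dim\ker A(n) = d-1$ as the load-bearing ingredients matches the paper's emphasis.
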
  

\subsection{The Lagrangian displacement is well-defined}
Here, we make the same assumptions on the domain $\Omega$ and on the background flow as in the previous section; in particular, we restrict the attention to the case $n\cdot u_0 = 0$ on $\partial\Omega$.
Employing the abstract framework for Friedrichs' systems, briefly recalled in Section~\ref{sec:preliminaries}, we demonstrate that if we supply an initial condition, the Lagrangian displacement is unambiguously defined by equation~\eqref{eq:dEuRelw}; that is, the Lagrangian displacement is defined as the solution to the initial value problem
\begin{subequations}
	\begin{align}
		(\partial_t + \Lie{u_0})w &= \pert{}u~&\text{in $\Omega$ for all $t\in(0,\tau)$,}\label{eq:IVP_wEQ}\tag{(v.s.~\eqref{eq:dEuRelw})~\ref{eq:IVP_w}a}\\
		w &= w_\I~&\text{ in $\Omega$ at $t = 0,$}\label{eq:IVP_wIC}\tag{\ref{eq:IVP_w}b}	
	\end{align}
	\label{eq:IVP_w}%
\end{subequations}
where $0<\tau<\infty$.

Analogously to the previous section, let $L = L_{\rho_0}(\Omega)^d$ and $\mathcal{D}= C_0^{\infty}(\Omega)^d$.
In equation~(\ref{eq:IVP_w}a) we find the operator $T = u_0\cdot\nabla - \nabla u_0$ and note its formal adjoint in $L$, $\tilde{T} = -u_0\cdot\nabla - (\nabla u_0)^T$.
Then conditions~\eqref{eq:T1} and~\eqref{eq:T2} in Section~\ref{sec:preliminaries} are satisfied. 
In this case the graph space is $W = \{\xi\in L\mid (u_0\cdot\nabla)\xi\in L\}$.
Since $n \cdot u_0 = 0$ on $\partial\Omega$, no boundary condition is needed and we define $V = \tilde{V} = W$.
Using that $C^1(\overline{\Omega})^d$ is dense in $W$~\cite{Ra85}[Proposition 1], we obtain that the boundary operator $\mathfrak{D}\equiv 0$. 
Thus conditions~\eqref{eq:V1} and~\eqref{eq:V2} in Section~\ref{sec:preliminaries} are satisfied. 
From Theorem~\ref{thm:linEulWP}, we obtain that $f:=\pert{}u \in C([0,\tau];L)\subset L^1((0,\tau);L)$.
Therefore, Theorem~\ref{th:gen} yields the following well-posedness result regarding initial value problem~\eqref{eq:IVP_w}. 
\begin{thm}
\label{thm:lagrangianDispWellDef}
Assume that $n\cdot u_0 = 0$ on the boundary. For any initial datum $w_\I\in L$, the Lagrangian displacement, defined as the solution to initial value problem~\eqref{eq:IVP_w}, is well-defined in the sense of Theorem~\ref{th:gen}.
\end{thm}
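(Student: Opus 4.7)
The plan is to verify that the abstract Friedrichs framework of Theorem~\ref{th:gen} applies to the concrete setup described in the paragraph preceding the theorem, so that the conclusion drops out directly. In particular I will check conditions~\eqref{eq:T1}, \eqref{eq:T2}, \eqref{eq:V1}, \eqref{eq:V2}, identify the forcing as an element of $L^1((0,\tau);L)$, and then appeal to Theorem~\ref{th:gen}.

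For the operator identification, $T\xi = (u_0\cdot\nabla)\xi - (\nabla u_0)\xi$ acts on $\mathcal{D}=C_0^\infty(\Omega)^d$. I would first establish \eqref{eq:T1} by integrating by parts: for $\phi,\psi\in\mathcal{D}$, the first-order part contributes $\int_\Omega \rho_0\,((u_0\cdot\nabla)\phi)\cdot\psi = -\int_\Omega \phi\cdot\nabla\cdot(\rho_0 u_0\psi^T)$, and the $\nabla\cdot(\rho_0 u_0)=0$ identity coming from mass conservation~\eqref{eq:Eu02} (together with compact support of $\phi,\psi$) eliminates the troublesome term, yielding $\tilde T\psi=-(u_0\cdot\nabla)\psi-(\nabla u_0)^T\psi$ as the formal adjoint in $L$. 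Condition \eqref{eq:T2} is then immediate because $(T+\tilde T)\phi=-(\nabla u_0+(\nabla u_0)^T)\phi$ is a zeroth-order multiplication by a bounded tensor, since $u_0$ is Lipschitz on $\bar\Omega$.

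Next I would handle the boundary-condition conditions. Since $n\cdot u_0=0$ on $\partial\Omega$, an integration-by-parts computation on smooth functions gives, for any $\phi,\psi\in C^1(\bar\Omega)^d$,
\begin{equation}
(T\phi,\psi)_L-(\phi,\tilde T\psi)_L=\int_{\partial\Omega}\rho_0\,(n\cdot u_0)\,\phi\cdot\psi=0.
\end{equation}
Using the density of $C^1(\bar\Omega)^d$ in $W$ from Rauch~\cite{Ra85}[Proposition~1], this extends to all $\xi,\tilde\xi\in W$, so $\mathfrak{D}\equiv 0$. Taking $V=\tilde V=W$ then makes \eqref{eq:V1} trivial (both sides are zero) and \eqref{eq:V2} reduces to $W=W^0$, which is exactly what $\mathfrak{D}\equiv 0$ says.

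Finally, by Theorem~\ref{thm:linEulWP} applied with initial datum $(\pert{}u_\I,\pert{}\hat\rho_\I)\in L_{\rho_0}^2(\Omega)^{d+1}$ and forcing $(\pert{}\varphi,0)$, the Eulerian velocity perturbation satisfies $\pert{}u\in C([0,\tau];L_{\rho_0}^2(\Omega)^d)\subset L^1((0,\tau);L)$, which is the right-hand side $f$ of problem~\eqref{eq:IVP_w}. With all hypotheses of Theorem~\ref{th:gen} verified, the mild solution formula~\eqref{eq:mildSolution} produces a unique $w\in C([0,\tau];L)$ for every $w_\I\in L$, completing the proof. The only step requiring genuine care is the extension of the boundary-term computation from $C^1(\bar\Omega)^d$ to all of $W$; everything else is a routine check of definitions.
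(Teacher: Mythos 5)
Your proposal is correct and follows essentially the same route as the paper's own argument: the same identification of $T=u_0\cdot\nabla-\nabla u_0$ and its formal adjoint $\tilde T=-u_0\cdot\nabla-(\nabla u_0)^T$ in $L=L^2_{\rho_0}(\Omega)^d$ using $\nabla\cdot(\rho_0u_0)=0$, the same observation that $n\cdot u_0=0$ together with the density of $C^1(\bar\Omega)^d$ in $W$ (Rauch) yields $\mathfrak{D}\equiv 0$ so that $V=\tilde V=W$ satisfies \eqref{eq:V1}--\eqref{eq:V2}, and the same appeal to Theorem~\ref{thm:linEulWP} to place $f=\pert{}u$ in $C([0,\tau];L)\subset L^1((0,\tau);L)$ before invoking Theorem~\ref{th:gen}.
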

Recall from Section~\ref{sec:noResAss} that, since the background flow is assumed to be everywhere tangential to the boundary, we may always choose the initial datum $w_\I$ in inital condition~\eqref{eq:IVP_wIC} so that the ``no resonance'' assumption is satisfied.
         
\begin{rmrk}
Note that for background flows that cross the domain boundary, we need to supply \emph{both} an initial condition and a boundary condition on $\Gamma_-$ in order for the Lagrangian displacement to be well-defined by equation~\eqref{eq:dEuRelw}.
Moreover, this general case can be analyzed analogously as in the proof of Theorem~\ref{th:noResWellPo}.
\qed
\end{rmrk}  
  
\section{An energy estimate for Galbrun's equation}
In this section, we derive an a priori energy estimate for Galbrun's equation~\eqref{eq:G}. 
In contrast to Section~\ref{sec:existence} that mostly considers \emph{steady} background flows, we consider \emph{unsteady} background flows in this section.
As will be seen, the obtained energy estimate for Galbrun's equation has the same form as the one for the first order system 
\begingroup
\renewcommand*{\arraystretch}{1.8}
\begin{equation}
\!\!\left(\!\!\rho_0\partial_t \!+\!
\begin{pmatrix}
	\rho_0u_0\!\cdot\!\nabla & \nabla(\rho_0c_0 \cdot) & 0\\
	\rho_0c_0\nabla\cdot & \rho_0u_0\!\cdot\!\nabla & 0\\
	0&0& \rho_0u_0\!\cdot\!\nabla
\end{pmatrix}
\!\!+\!\rho_0c_0\!\!
\begin{pmatrix}
	\dfrac{\nabla u_0}{c_0} & -\dfrac{\nabla\rho_0}{\rho_0} & 0\\
	\dfrac{\nabla\rho_0}{\rho_0}\cdot & -\dfrac{D_0c_0}{c_0^2} & 0\\
	-\dfrac{1}{c_0\tau_0} & 0 & -\dfrac{\nabla u_0}{c_0}
\end{pmatrix}
\!\!\right)
\!\!
\begin{pmatrix}
	\pert{}u\\
	\pert{}\hat{\rho}\\
	\dfrac{w}{\tau_0}
\end{pmatrix}
\!\!=\!\rho_0\!\!
\begin{pmatrix}
	\pert{}\varphi\\
	0\\
	0
\end{pmatrix}\!\!,\!\!
\label{eq:scaledeElE}
\end{equation}
\endgroup
which is formed by appending equation~\eqref{eq:dEuRelw} to the linearized Euler's equations~\eqref{eq:scaledElE}.
In equation~\eqref{eq:scaledeElE} we have introduced the scaled quantity $\tau_0^{-1}w$ (in units of velocity), where, as before, $\tau_0>0$ is an arbitrary time scale. 
To derive the energy estimates, we assume sufficient regularity of the solution and the background flow.
By applying $\int_\Omega(\pert{}u^T, \pert{}\hat{\rho},\tau_0^{-1}w^T)$ to equation~\eqref{eq:scaledeElE} from the left and integrating by parts all terms containing first order derivatives (taking advantage of integration-by-parts formula~\eqref{eq:ip3}), we obtain
\begin{equation}
\begin{aligned}
&\frac{1}{2}\frac{d}{dt}\left(\tau_0^{-2}(\rho_0w,w) + (\rho_0\,\pert{}\hat{\rho},\pert{}\hat{\rho})+(\rho_0\,\pert{}u,\pert{}u)\right) = \\
&=\tau_0^{-2}(\rho_0w,(\nabla u_0)w) + \left(\rho_0\pert{}\hat{\rho},\frac{D_0 c_0}{c_0}\pert{}\hat{\rho}\right) - (\rho_0\pert{}u,(\nabla u_0)\pert{}u) + \tau_0^{-2}(\rho_0\pert{}u,w)\\
&- \frac{1}{2}\int\limits_{\partial\Omega}{\rho_0\xi^T\! K(n)\xi } + (\rho_0\pert{}u,\pert{}\varphi),
\end{aligned}
\label{eq:preEstimateEEle}
\end{equation}
where 
\begin{equation}
\xi^T\! K(n)\xi = 
\begin{pmatrix}
	\pert{}u\\
	\pert{}\hat{\rho}\\
	w/\tau_0
\end{pmatrix}^T\!\!\!
\begin{pmatrix}
n\cdot u_0 & c_0 n & 0\\
c_0 n\cdot & n\cdot u_0 & 0\\
0 & 0 & n\cdot u_0	
\end{pmatrix}
\begin{pmatrix}
	\pert{}u\\
	\pert{}\hat{\rho}\\
	w/\tau_0
\end{pmatrix}.
\label{eq:enEstGlEleBndTerm}
\end{equation}

Our first result towards an energy estimate for Galbrun's equation is to show that relation~\eqref{eq:preEstimateEEle} also holds for Galbrun's equation. 
However, in that case the ``no resonance" assumption holds and $\pert{}\hat{\rho} = c_0\rho_0^{-1}\pert{}\rho = -c_0\rho_0^{-1}\nabla\cdot(\rho_0w)$ (recall relation~\eqref{eq:noResIndErho} and definition~\eqref{eq:dHatRho}). 
 
\begin{lem}
\label{lem:enEst}
If $w$ is sufficiently regular and satisfies Galbrun's equation~\eqref{eq:G} for a sufficiently regular background flow, then relation~\eqref{eq:preEstimateEEle} holds with $\pert{}u = (\partial_t+\Lie{u_0})w$ and $\pert{}\hat{\rho} = -c_0\rho_0^{-1}\nabla\cdot(\rho_0 w)$. 
\end{lem}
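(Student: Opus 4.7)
The plan is to show that any sufficiently regular $w$ solving Galbrun's equation~\eqref{eq:G}, together with the ansatz $\pert{}u := (\partial_t + \Lie{u_0})w$ and $\pert{}\hat{\rho} := -c_0\rho_0^{-1}\nabla\cdot(\rho_0 w)$, produces a classical solution $(\pert{}u,\pert{}\hat{\rho},w/\tau_0)$ of the augmented first-order system~\eqref{eq:scaledeElE}. Once this is in place, relation~\eqref{eq:preEstimateEEle} is exactly the output of the multiplication-and-integration-by-parts procedure described in the paragraph immediately preceding the lemma statement, applied to~\eqref{eq:scaledeElE}; no further work is needed.

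To verify that the triple solves~\eqref{eq:scaledeElE}, I would check each of its three block rows. The third row, $(\partial_t+\Lie{u_0})w = \pert{}u$, holds by construction. For the momentum (first) row, set $\pert{}\rho := -\nabla\cdot(\rho_0 w)$, so that $\pert{}\hat{\rho}=c_0\pert{}\rho/\rho_0$, and $\pert{}p := c_0^2\pert{}\rho$. Term by term, Galbrun's equation~\eqref{eq:G} then reads $\rho_0 D_0(D_0 w-(w\cdot\nabla)u_0)=\rho_0 D_0\pert{}u$, $-\nabla(c_0^2\nabla\cdot(\rho_0 w))=\nabla\pert{}p$, $\rho_0((D_0 w-(w\cdot\nabla)u_0)\cdot\nabla)u_0=\rho_0(\pert{}u\cdot\nabla)u_0$, and $(\nabla p_0/\rho_0)\nabla\cdot(\rho_0 w)=-(\nabla p_0/\rho_0)\pert{}\rho$. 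Collecting these recovers the linearized Euler momentum equation~\eqref{eq:ElE1}, which is the first row of~\eqref{eq:scaledeElE} in unscaled form.

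The only genuinely substantive check concerns the continuity (second) row, $D_0\pert{}\rho + \nabla\cdot(\rho_0\pert{}u) + (\nabla\cdot u_0)\pert{}\rho = 0$. The key tool here is identity~\eqref{eq:divdEu} proved in Section~\ref{sec:derivation}: it gives $\nabla\cdot(\rho_0\pert{}u) = \rho_0 D_0(\rho_0^{-1}\nabla\cdot(\rho_0 w)) = -\rho_0 D_0(\pert{}\rho/\rho_0)$. Expanding the right-hand side with the product rule for $D_0$ and summing with the other two terms collapses everything to the multiple $\pert{}\rho(\rho_0^{-1}D_0\rho_0 + \nabla\cdot u_0)$, which vanishes by mass conservation~\eqref{eq:Eu02} for the background flow. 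I do not foresee a serious obstacle: the computation is mechanical once~\eqref{eq:divdEu} is invoked, and the unsteadiness of the background flow together with the absence of any tangentiality assumption on $u_0$ cause no difficulty, because the derivation of~\eqref{eq:preEstimateEEle} uses only pointwise manipulations and integration by parts and imposes no boundary condition.
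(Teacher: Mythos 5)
Your proposal is correct, and it rests on exactly the same two substantive ingredients as the paper --- identity~\eqref{eq:divdEu} and background mass conservation~\eqref{eq:Eu02} --- but it organizes the argument differently. The paper recasts Galbrun's equation as the first-order system~\eqref{eq:G1stOrder}, consisting of the momentum equation~\eqref{eq:G1stOrder1} together with the two \emph{definitions}~\eqref{eq:G1stOrder2}--\eqref{eq:G1stOrder3}; it never asserts the continuity equation as a separate identity. It then tests~\eqref{eq:G1stOrder1} with $\pert{}u$ and~\eqref{eq:G1stOrder2} with $\rho_0w$, and the term $\frac{1}{2}\frac{d}{dt}(\rho_0\,\pert{}\hat{\rho},\pert{}\hat{\rho})$ is extracted from the pressure-gradient term $(\pert{}u,\nabla(\rho_0c_0\,\pert{}\hat{\rho}))$ by integrating by parts and eliminating $-\nabla\cdot\pert{}u$ via~\eqref{eq:divdEu} (step~\eqref{eq:prp1_4}). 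You instead promote the triple $(\pert{}u,\pert{}\hat{\rho},\tau_0^{-1}w)$ to a genuine solution of the augmented system~\eqref{eq:scaledeElE}; the continuity row holds identically because $h\equiv 0$ by the very definition $\pert{}\rho:=-\nabla\cdot(\rho_0 w)$, which is precisely~\eqref{eq:divdEu} combined with the algebraic rearrangement~\eqref{eq:2}, and your verifications of the momentum and displacement rows are correct. Your route makes transparent \emph{why} the Galbrun energy relation coincides with the one for the linearized Euler system, and it correctly notes that neither steadiness nor tangentiality of $u_0$ is used; its only cost is that it defers the actual integration by parts to the derivation of~\eqref{eq:preEstimateEEle} from~\eqref{eq:scaledeElE}, which the paper asserts in the preceding paragraph but does not write out, whereas the paper's proof of the lemma carries out that computation in full and is therefore self-contained.
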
 
\begin{proof}
We start by writing Galbrun's equation~\eqref{eq:G} as the first order system
\begin{subequations}
\begin{align}
\rho_0{D}_0\,\pert{}u + \nabla(\rho_0c_0\,\pert{}\hat{\rho}) + \rho_0(\pert{}u\cdot\nabla){u_0}-\rho_0\frac{\nabla p_0}{\rho_0c_0}\pert{}\hat{\rho} &= \rho_0\,\pert{}{\varphi},\label{eq:G1stOrder1}\\
\pert{}u &= (\partial_t+\Lie{u_0})w =  D_0 w - (w\cdot\nabla)u_0,\label{eq:G1stOrder2}\\
\pert{}\hat{\rho} &= -c_0\rho_0^{-1}\nabla\cdot(\rho_0 w).\label{eq:G1stOrder3}
\end{align}
\label{eq:G1stOrder}%
\end{subequations}
Multiplying equation~\eqref{eq:G1stOrder1} with $\pert{}u^T$ from the left and integrating over $\Omega$, we obtain 
\begin{equation}
(\pert{}u,\rho_0\,\pert{}\varphi) = (\pert{}u,\rho_0D_0\,\pert{}u) + (\pert{}u, \nabla(\rho_0c_0\,\pert{}\hat{\rho})) + (\pert{}u,\rho_0(\pert{}u\cdot\nabla)u_0)  - (\pert{}u,c_0\nabla\rho_0\,\pert{}\hat{\rho}),
\label{eq:prp1_1}
\end{equation}
where we have rewritten the last term using that $\nabla p_0 = c_0^2\nabla \rho_0$, which follows from the equation of state~\eqref{eq:Eu03}.
Applying integration-by-parts formula~\eqref{eq:ip3} to the first term in equation~\eqref{eq:prp1_1}, we find that
\begin{equation}
(\pert{}u,\rho_0D_0\,\pert{}u) = \frac{1}{2}\frac{d}{dt}(\rho_0\,\pert{}u,\pert{}u) + \frac{1}{2}(\rho_0(n\cdot u_0)\,\pert{}u,\pert{}u)_{\partial\Omega}.
\label{eq:prp1_2}
\end{equation}   
Integration by parts of the second term of equation~\eqref{eq:prp1_1} yields
\begin{equation}
(\pert{}u,\nabla(\rho_0c_0\,\pert{}\hat{\rho})) = -(\nabla\cdot\pert{}u,\rho_0c_0\,\pert{}\hat{\rho}) + (n\cdot\pert{}u,\rho_0c_0\,\pert{}\hat{\rho})_{\partial\Omega}.
\label{eq:prp1_3}
\end{equation}   
From identity~\eqref{eq:divdEu} and definition~\eqref{eq:G1stOrder3}, we deduce that
\begin{align}
-\nabla\cdot\pert{}u &= -\nabla\cdot\left(\rho_0^{-1}\rho_0\,\pert{}u\right) = -\rho_0\,\pert{}u\cdot\nabla\rho_0^{-1} - \rho_0^{-1}\nabla\cdot(\rho_0\,\pert{}u)\nonumber\\ 
&=  \rho_0^{-1}\pert{}u\cdot\nabla\rho_0 + D_0(c_0^{-1}\pert{}\hat{\rho}) = \rho_0^{-1}\pert{}u\cdot\nabla\rho_0 + (D_0c_0^{-1})\pert{}\hat{\rho} + c_0^{-1}D_0\,\pert{}\hat{\rho}, 
\label{eq:prp1_4}
\end{align}
which substituted into expression~\eqref{eq:prp1_3} gives
\begin{align}
(\pert{}u,\nabla(\rho_0c_0\,\pert{}\hat{\rho})) &= (\rho_0^{-1}\pert{}u\cdot\nabla\rho_0 + (D_0c_0^{-1})\pert{}\hat{\rho} + c_0^{-1}D_0\,\pert{}\hat{\rho},\rho_0c_0\,\pert{}\hat{\rho}) + (n\cdot\pert{}u,\rho_0c_0\,\pert{}\hat{\rho})_{\partial\Omega}\nonumber\\
&= (\rho_0^{-1}\pert{}u\cdot\nabla\rho_0,\rho_0c_0\,\pert{}\hat{\rho}) + ((D_0c_0^{-1})\pert{}\hat{\rho},\rho_0c_0\,\pert{}\hat{\rho}) + (\rho_0D_0\,\pert{}\hat{\rho},\pert{}\hat{\rho}) + (n\cdot\pert{}u,\rho_0c_0\,\pert{}\hat{\rho})_{\partial\Omega}\nonumber\\
&= (\rho_0^{-1}\pert{}u\cdot\nabla\rho_0,\rho_0c_0\,\pert{}\hat{\rho}) + ((D_0c_0^{-1})\pert{}\hat{\rho},\rho_0c_0\,\pert{}\hat{\rho}) + \frac{1}{2}\frac{d}{dt}(\rho_0\,\pert{}\hat{\rho},\pert{}\hat{\rho})\nonumber\\ 
&\phantom{=}+ \frac{1}{2}(\rho_0(n\cdot u_0)\pert{}\hat{\rho},\pert{}\hat{\rho})_{\partial\Omega} + (n\cdot\pert{}u,\rho_0c_0\,\pert{}\hat{\rho})_{\partial\Omega},
\label{eq:prp1_5}
\end{align} 
where we have employed integration-by-parts formula~\eqref{eq:ip3} to arrive at the final expression.
Substituting expressions~\eqref{eq:prp1_2} and~\eqref{eq:prp1_5} into expression~\eqref{eq:prp1_1}, we find that
\begin{align}
(\pert{}u,\rho_0\,\pert{}\varphi) &=\frac{1}{2}\frac{d}{dt}\Big((\rho_0\,\pert{}u,\pert{}u) + (\rho_0\,\pert{}\hat{\rho},\pert{}\hat{\rho})\Big) +  (\rho_0^{-1}\pert{}u\cdot\nabla\rho_0,\rho_0c_0\,\pert{}\hat{\rho}) + ((D_0c_0^{-1})\pert{}\hat{\rho},\rho_0c_0\,\pert{}\hat{\rho})\nonumber\\ &\phantom{=}+ (\pert{}u,\rho_0(\pert{}u\cdot\nabla)u_0)  - (\pert{}u,c_0\nabla\rho_0\,\pert{}\hat{\rho})\nonumber\\ 
&\phantom{=}+ \frac{1}{2}(\rho_0(n\cdot u_0)\pert{}u,\pert{}u)_{\partial\Omega} + \frac{1}{2}(\rho_0(n\cdot u_0)\pert{}\hat{\rho},\pert{}\hat{\rho})_{\partial\Omega} + (n\cdot\pert{}u,\rho_0c_0\,\pert{}\hat{\rho})_{\partial\Omega}.
\label{eq:prp1_6}
\end{align}

Multiplying equation~\eqref{eq:G1stOrder2} by $\rho_0 w^T$ from the left and integrating over $\Omega$, we obtain that 
\begin{equation}
(\rho_0w,\pert{}u) = (\rho_0w,D_0w) - (\rho_0w,(w\cdot\nabla)u_0).
\label{eq:prp1_7}
\end{equation}
Integration-by-parts formula~\eqref{eq:ip3} applied to the first term in equation~\eqref{eq:prp1_7} gives
\begin{equation}
\tau_0^{-2}(\rho_0w,\pert{}u) = \tau_0^{-2}\frac{1}{2}\frac{d}{dt}(\rho_0w,w) + \frac{1}{2}\tau_0^{-2}(\rho_0(n\cdot u_0)w,w)_{\partial\Omega} - \tau_0^{-2}(\rho_0w,(w\cdot\nabla)u_0),
\label{eq:prp1_8}
\end{equation}  
where the factor $\tau_0^{-2}$ has been introduced to match the dimensions of expressions~\eqref{eq:prp1_6} and~\eqref{eq:prp1_8}.
By adding equations~\eqref{eq:prp1_6} and~\eqref{eq:prp1_8}, we obtain relation~\eqref{eq:preEstimateEEle}.
\end{proof}

We will once more restrict our attention to the case where the background flow is everywhere tangential to $\partial\Omega$, that is, the case when the ``no resonance'' assumption can be enforced by adjusting the initial datum of the Lagrangian displacement.
To derive an energy estimate for Galbrun's equation~\eqref{eq:G} in this case, we first completely specify the initial--boundary value problem
\begin{subequations}
	\begin{align}
		\rho_0D_0(D_0w - (w\cdot\nabla)u_0) -\nabla(c_0^2\nabla\cdot(\rho_0w)) \,+\nonumber\\
		\rho_0\left((D_0w - (w\cdot\nabla)u_0)\cdot\nabla\right)u_0+\frac{\nabla p_0}{\rho_0}\nabla\cdot(\rho_0w) &= \rho_0\pert{}\varphi~&\text{in $\Omega$ for all $t\in(0,\tau),$}\label{eq:IVP_GEq}\\
		w = w_\I~\text{and } \partial_t w &= z_\I~&\text{ in $\Omega$ at $t = 0,$}\label{eq:IVP_GIC}\\
		-Yc_0\rho_0^{-1}\nabla\cdot(\rho_0 w)-n\cdot((\partial_t+\Lie{u_0})w) &= g~&\text{on $\partial\Omega$ for all $t\in(0,\tau)$,}\label{eq:IVP_GBC}
	\end{align}
	\label{eq:IVP_G}%
\end{subequations}
where, as in Section~\ref{sec:ElE_WP}, $Y:\partial\Omega\to [0,\infty)$ is a Lipschitz continuous (dimensionless) admittance function with the additional requirement that $Y\geq a>0$, and $0<\tau<\infty$. 
We assume that 
\begin{align}
&0 < \underline{\rho_0} := \inf_{\bar{\Omega}\times [0,\tau]}\rho_0 \leq \sup_{\bar{\Omega}\times [0,\tau]}\rho_0 =: \overline{\rho_0} <\infty,\label{eq:b1}\\
&0<\underline{c_0} := \inf_{\bar{\Omega}\times [0,\tau]}c_0 \leq \sup_{\bar{\Omega}\times [0,\tau]}c_0 =: \overline{c_0} < \infty,\label{eq:b2}\\
&\sup_{t\in [0,\tau]}\|\nabla u_0(t)\|_{L^{\infty}(\Omega)^{d\times d}} =: \overline{|\nabla u_0|}<\infty,\label{eq:b3}\\
&\sup_{t\in[ 0,\tau]}\|(c_0^{-1}D_0c_0)(t)\|_{L^{\infty}(\Omega)} =: \overline{|c_0^{-1}D_0c_0|}<\infty. \label{eq:b4}
\end{align}

\begin{thm}
\label{prp:enEstG} 
Assume that $w$ is a sufficiently regular solution to initial boundary value problem~\eqref{eq:IVP_G} for sufficiently regular data and a sufficiently regular background flow that is everywhere tangential to the boundary.
For any finite time $\tau>0$, there exists $C,\nu > 0$ that are independent of $w$ such that for any $0<t<\tau$
\begin{equation}
\begin{aligned}
&\tau_0^{-2}\|{w}(t)\|^2 + \|{c_0\rho_0^{-1}\nabla\cdot(\rho_0w)}(t)\|^2 + \|(\partial_t+\mathcal{L}_{u_0})w(t)\|^2 \leq \\
&\quad Ce^{\nu t} \Bigg(\tau_0^{-2}\|{w}_\I\|^2 + \|c_0(0)\rho_0(0)^{-1}\nabla\cdot(\rho_0(0)w_\I)\|^2 + \|z_\I+\mathcal{L}_{u_0(0)}w_\I\|^2 +\\ 
&\quad\quad\int\limits_{0}^{t}{\!\tau_0\|\pert{}\varphi(s)\|^2 + a^{-1}\overline{c_0}\|g(s)\|_{\partial\Omega}^2\,\mathrm{d}s}\Bigg).
\end{aligned}
\label{eq:enEstG}
\end{equation}
\end{thm}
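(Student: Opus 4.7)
The plan is to derive a Grönwall differential inequality from the energy identity \eqref{eq:preEstimateEEle} of Lemma \ref{lem:enEst}, using the admittance boundary condition \eqref{eq:IVP_GBC} to control the surface flux, and then bounding every remaining contribution on the right-hand side of \eqref{eq:preEstimateEEle} by a constant multiple of the natural energy
$$E(t) := \tau_0^{-2}(\rho_0 w,w) + (\rho_0\,\pert{}\hat\rho,\pert{}\hat\rho) + (\rho_0\,\pert{}u,\pert{}u),$$
with the shorthand $\pert{}u = (\partial_t+\Lie{u_0})w$ and $\pert{}\hat\rho = -c_0\rho_0^{-1}\nabla\cdot(\rho_0 w)$. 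Since \eqref{eq:b1} provides uniform two-sided bounds on $\rho_0$, this $E(t)$ is equivalent, up to multiplicative constants, to the quantity on the left-hand side of \eqref{eq:enEstG}.

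Because the background flow is tangential to $\partial\Omega$, the diagonal entries $n\cdot u_0$ of the matrix $K(n)$ in \eqref{eq:enEstGlEleBndTerm} vanish, and the surface integral in \eqref{eq:preEstimateEEle} collapses to
$$\tfrac{1}{2}\int_{\partial\Omega}\rho_0\,\xi^T\! K(n)\xi = \int_{\partial\Omega} \rho_0 c_0\,(n\cdot\pert{}u)\,\pert{}\hat\rho.$$
The boundary condition \eqref{eq:IVP_GBC} reads $-n\cdot\pert{}u + Y\pert{}\hat\rho = g$, so I would substitute $n\cdot\pert{}u = Y\pert{}\hat\rho - g$. Recalling the minus sign in front of the boundary term in \eqref{eq:preEstimateEEle}, the resulting $\pert{}\hat\rho^{\,2}$-integral has a favorable sign since $Y\geq a>0$, while the $g\,\pert{}\hat\rho$ cross term is split by Young's inequality so that half of the $\int_{\partial\Omega}\rho_0 c_0 Y\pert{}\hat\rho^{\,2}$ contribution absorbs the bad part, leaving a residual bounded by $(\overline{\rho_0}\,\overline{c_0}/(2a))\,\|g\|_{\partial\Omega}^2$.

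For the interior terms on the right-hand side of \eqref{eq:preEstimateEEle}, I would control $\tau_0^{-2}(\rho_0 w,(\nabla u_0)w)$, $(\rho_0\pert{}\hat\rho,\,c_0^{-1}(D_0 c_0)\pert{}\hat\rho)$, and $(\rho_0\pert{}u,(\nabla u_0)\pert{}u)$ directly by $E(t)$ times constants read off from \eqref{eq:b3}--\eqref{eq:b4}. The cross term $\tau_0^{-2}(\rho_0\pert{}u, w)$ is split by Young's inequality into pieces of the form $\tau_0^{-1}(\rho_0\pert{}u,\pert{}u)$ and $\tau_0^{-1}\cdot\tau_0^{-2}(\rho_0 w,w)$, both parts of $E(t)$. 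The source is handled by Young with parameter $\tau_0$, giving $|(\rho_0\pert{}u,\pert{}\varphi)|\leq (2\tau_0)^{-1}(\rho_0\pert{}u,\pert{}u) + (\tau_0\overline{\rho_0}/2)\|\pert{}\varphi\|^2$, which is what produces the $\tau_0\|\pert{}\varphi\|^2$ factor in \eqref{eq:enEstG}. Combining everything yields a differential inequality
$$\tfrac{d}{dt}E(t) \leq \nu E(t) + C_1\tau_0\|\pert{}\varphi(t)\|^2 + C_2\,a^{-1}\overline{c_0}\|g(t)\|_{\partial\Omega}^2,$$
with $\nu, C_1, C_2$ depending only on $\underline{\rho_0}, \overline{\rho_0}, \overline{c_0}, \overline{|\nabla u_0|}, \overline{|c_0^{-1}D_0 c_0|}$, and $\tau_0$. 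Grönwall's inequality, followed by the equivalence between $E(t)$ and the unweighted $L^2$ quantities on both sides of \eqref{eq:enEstG}, then closes the estimate.

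The main obstacle is the boundary analysis: correctly tracking the sign of the surface term in \eqref{eq:preEstimateEEle}, producing from the admittance condition a nonnegative $\pert{}\hat\rho$-quadratic piece that can absorb (half of) the $g\pert{}\hat\rho$ cross term uniformly in view of $Y\geq a$, and carrying the resulting dependence on $\overline{c_0}/a$ through to the final constant. Everything else is a routine combination of Cauchy--Schwarz, Young's inequality, the bounds \eqref{eq:b1}--\eqref{eq:b4}, and Grönwall's lemma, under the standing assumption that $w$ is regular enough to validate Lemma \ref{lem:enEst} and to make the boundary traces well defined.
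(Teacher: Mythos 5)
Your proposal is correct and follows essentially the same route as the paper: both start from the energy identity~\eqref{eq:preEstimateEEle} of Lemma~\ref{lem:enEst} in the variables $\pert{}u=(\partial_t+\Lie{u_0})w$, $\pert{}\hat\rho=-c_0\rho_0^{-1}\nabla\cdot(\rho_0w)$, reduce the boundary flux to $\rho_0c_0(n\cdot\pert{}u)\pert{}\hat\rho$ using $n\cdot u_0=0$, absorb the $g\,\pert{}\hat\rho$ cross term into the nonnegative $Y\pert{}\hat\rho^2$ piece via Young's inequality with $Y\geq a$, bound the interior terms by the weighted energy using~\eqref{eq:b3}--\eqref{eq:b4}, and close with Gr\"onwall and the norm equivalence from~\eqref{eq:b1}, yielding $C=\overline{\rho_0}/\underline{\rho_0}$.
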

\begin{proof}
As in the proof of Lemma~\ref{lem:enEst}, we introduce $\pert{}u$ and $\pert{}\hat{\rho}$ by expressions~\eqref{eq:G1stOrder2} and~\eqref{eq:G1stOrder3}. 
Moreover, we introduce the corresponding expressions at time $t = 0$
\begin{equation}
\begin{aligned}
\pert{}u_\I = z_\I+\Lie{u_0(0)}w_\I,\\
\pert{}\hat{\rho}_\I = -c_0(0)\rho_0(0)^{-1}\nabla\cdot(\rho_0(0) w_\I).
\end{aligned}
\label{eq:du0_drho0}
\end{equation}
Then estimate~\eqref{eq:enEstG} takes the form 
\begin{equation}
\begin{aligned}
&\tau_0^{-2}\|{w}(t)\|^2+\|\pert{}\hat{\rho}(t)\|^2+\|\pert{}u(t)\|^2 \leq \\
&\quad\quad Ce^{\nu t}\left(\tau_0^{-2}\|{w}_\I\|^2 + \|\pert{}\hat{\rho}_\I\|^2 + \|\pert{}u_\I\|^2 + \int\limits_{0}^{t}{\tau_0\|\pert{}\varphi(s)\|^2 + \overline{c_0}\|g(s)\|_{\partial\Omega}^2\,\mathrm{d}s}\right).
\label{eq:enEstG1}
\end{aligned}
\end{equation}
For convenience and consistent with definition~\eqref{eq:enEstGlEleBndTerm}, we introduce $\xi = (\pert{}u,\pert{}\hat{\rho},\tau_0^{-1}w)$ and $\xi_\I = (\pert{}u_\I,\pert{}\hat{\rho}_\I,\tau_0^{-1}w_\I)$. 
By the bounds~\eqref{eq:b3} and~\eqref{eq:b4}, we obtain from relation~\eqref{eq:preEstimateEEle} that
\begin{equation}
\frac{1}{2}\frac{d}{dt}\|\xi\|_{\rho_0}^2 
\leq \underbrace{\tau_0^{-1}\max\left\{1+\tau_0\overline{|\nabla u_0|},\tau_0\overline{|c_0^{-1}D_0c_0|}\right\}}_{=: \nu/2}\|\xi\|_{\rho_0}^2 + \frac{\tau_0}{2}\|\pert{}\varphi\|_{\rho_0}^2-\frac{1}{2}\int\limits_{\partial\Omega}\rho_0 \xi^T\! K(n)\xi,
\label{eq:preEst1}
\end{equation}
where $\|\cdot\|_{\rho_0}^2 = (\rho_0\cdot,\cdot)$. 
We now derive an estimate for the boundary term in expression~\eqref{eq:preEst1}. 
Observe that by expressions~\eqref{eq:G1stOrder2} and~\eqref{eq:G1stOrder3}, boundary condition~\eqref{eq:IVP_GBC} can be reexpressed in $\pert{}u$ and $\pert{}\hat{\rho}$,
\begin{equation}
-n\cdot\pert{}u + Y\pert{}\hat{\rho} = g~\text{at $\partial\Omega$ for all $t\in (0,\tau)$}.
\label{eq:GBCdu_drho}
\end{equation}
Recalling that $n\cdot u_0 = 0$ on $\partial\Omega$, we obtain by boundary condition~\eqref{eq:GBCdu_drho} and definition~\eqref{eq:enEstGlEleBndTerm} that
\begin{align}
-\frac{1}{2}\rho_0\xi^T\! K(n)\xi &= -\rho_0c_0(n\cdot\pert{}u)\pert{}\hat{\rho} = -\rho_0c_0Y\,\pert{}\hat{\rho}^2 + \rho_0c_0g\,\pert{}\hat{\rho}\nonumber\\ 
&\leq \frac{1}{2}\rho_0c_0a^{-1}g^2 + \rho_0c_0\left(\frac{a}{2}-Y\right)\pert{}\hat{\rho}^2 \leq a^{-1}\frac{1}{2}\rho_0c_0 g^2,
\label{eq:bndEst}
\end{align}
where we in the last step have used that $Y\geq a > 0$ on $\partial\Omega$.
Combining estimates~\eqref{eq:preEst1}, and~\eqref{eq:bndEst}, we obtain
\begin{align}
\frac{d}{dt}\|\xi\|_{\rho_0}^2 &\leq \nu\|\xi\|_{\rho_0}^2 + \tau_0\|\pert{}\varphi\|_{\rho_0}^2\!\!+a^{-1}\int\limits_{\partial\Omega}{\rho_0c_0 g^2}\nonumber\\
&\leq \nu \|\xi\|_{\rho_0}^2 + \tau_0\overline{\rho_0}\|\pert{}\varphi\|^2\!\!+a^{-1}\overline{\rho_0}\,\overline{c_0}\|g\|_{\partial\Omega}^2,
\label{eq:preEst2}
\end{align}
where we in the last step have used the bounds~\eqref{eq:b1} and~\eqref{eq:b2}.
Multiplying estimate~\eqref{eq:preEst2} by the integrating factor $\exp(-\nu t)$, integrating over the time interval $(0,t)\subset [0,\tau]$, and using that $\xi(0) = \xi_\I$, we obtain
\begin{equation}
\|\xi(t)\|_{\rho_0}^2 \leq e^{\nu t}\|\xi_\I\|_{\rho_0(0)}^2 + \overline{\rho_0}\int\limits_{0}^{t}{e^{\nu_\tau(t-s)}\left(\tau_0\|\pert{}\varphi\|^2\!\!+a^{-1}\overline{c_0}\|g\|_{\partial\Omega}^2\right)\mathrm{d}s}.
\label{preEst3} 
\end{equation}
Note that the bounds~\eqref{eq:b1} imply that $\underline{\rho_0}\|\cdot\|\leq \|\cdot\|_{\rho_0(t)}\leq \overline{\rho_0}\|\cdot\|$ for all $t\in[0,\tau]$. 
Thus, using $\exp(\nu(t-s))\leq \exp(\nu t)$, we end up with estimate~\eqref{eq:enEstG} with $C = \overline{\rho_0}/\underline{\rho_0}$.  
\end{proof} 

\section{Discussion}
The abstract definition~\eqref{eq:dEuRelw} of the Lagrangian displacement calls for adequate initial and boundary data. 
In the special case when the background flow is everywhere tangential to the boundary, no boundary datum is needed, and the initial datum can be determined so that the ``no resonance'' assumption is satisfied.
However, when the background flow passes through the boundary of the domain, we do not know whether a boundary datum that guarantees fulfillment of the ``no resonance'' assumption can be provided for the Lagrangian displacement.

We note that fulfillment of the ``no resonance'' assumption in any case requires that the initial data of $\pert{}\rho$ and $w$ satisfy condition~\eqref{eq:noResHomIC}; that is, the ``no resonance'' assumption imposes a restriction on the initial datum of the Lagrangian displacement. 
However, at least when the flow is everywhere tangential to the boundary, the ``no resonance'' assumption imposes no restriction on $\pert{}u$ and $\pert{}\rho$, that is, it does not restrict the physical behavior of the system. 

The role of the ``no resonance'' assumption has a striking resemblance in electrodynamics. 
In vacuum, Maxwell's equations are given by
\begin{subequations}
\begin{align}
\partial_t B + \nabla\times E &= 0,\label{eq:Me1}\\
\mu_0\epsilon_0\partial_t E - \nabla\times B &= -\mu_0 J,\label{eq:Me2}\\
\nabla\cdot B &= 0,\label{eq:Me3}\\
\epsilon_0 \nabla\cdot E &= \chi,\label{eq:Me4}
\end{align} 
\label{eq:Me}%
\end{subequations}
where $E$ and $B$ denotes the electric and magnetic fields, $J$ and $\chi$ the current and charge densities, $\mu_0$ the vacuum permeability, and $\epsilon_0$ the vacuum permittivity~\cite{Mo03}[Chapter 1.2].
It turns out that the divergence conditions~\eqref{eq:Me3} and~\eqref{eq:Me4} can be regarded as consequences of the principle of conservation of electric charge and equations~\eqref{eq:Me1} and~\eqref{eq:Me2}~\cite{Mo03}[Chapter 1.2].
Indeed, applying the divergence to equations~\eqref{eq:Me1} and~\eqref{eq:Me2}, we obtain that
\begin{subequations}
\begin{align}
\partial_t \nabla \cdot B &= 0,\label{eq:divMe1}\\
\mu_0\epsilon_0\partial_t \nabla\cdot E = -\mu_0 \nabla\cdot J = \mu_0\partial_t \chi \implies \partial_t(\epsilon_0\nabla\cdot E - \chi) &= 0,\label{eq:divMe2}
\end{align} 
\label{eq:divMe1And2}%
\end{subequations}
where we in expression~\eqref{eq:divMe2} have used that $\partial_t\chi + \nabla\cdot J = 0$, which expresses conservation of electric charge.
Thus, if the initial data for $E$ and $B$ satisfy divergence conditions~\eqref{eq:Me3} and~\eqref{eq:Me4}, then divergence conditions~\eqref{eq:Me3} and~\eqref{eq:Me4} hold for all subsequent times.
Analogously for Galbrun's equation, relation~\eqref{eq:LlE2Euler} was derived by applying the divergence to definition~\eqref{eq:dEuRelw} and invoking equation~\eqref{eq:ElE2}, which here plays the same role as conservation of charge. 
If the background flow is everywhere tangential to the boundary of the domain and the initial datum for $w$ satisfies divergence condition~\eqref{eq:noResIndErho}, then divergence condition~\eqref{eq:noResIndErho} holds for all subsequent times.
In principle, if the background flow passes through the boundary and $w$ satisfies divergence condition~\eqref{eq:noResIndErho} in $\Omega$ at $t = 0$ \emph{and} on $\Gamma_-\subset\partial\Omega$ for all $t>0$, then divergence condition~\eqref{eq:noResIndErho} holds for all subsequent times.
As already pointed out above, however, it is not clear how to handle the extra condition on the boundary part $\Gamma_-$.     

It would be tempting to define the Lagrangian displacement by both relations~\eqref{eq:dEuRelw} and~\eqref{eq:noResIndErho}, since then the ``no resonance'' assumption would be automatically satisfied.
However, such a definition does not fit the Friedrichs' framework employed here, and at present we do not know how it should be handled.    
Friedman and Schutz~\cite{FrSc78a} have made some investigations into the matter and remark that if $w$ satisfies both relations~\eqref{eq:dEuRelw} and~\eqref{eq:noResIndErho}, then so does $w + \rho_0^{-1}\nabla\times v$ for any vector field $v$ satisfying $\partial_tv + (u_0\cdot\nabla)v + (\nabla u_0)^Tv = 0$.
Thus, even when a solution that satisfies both relations~\eqref{eq:dEuRelw} and~\eqref{eq:noResIndErho} can be found, one may need to deal with a possible non-uniqueness.        
   
We have presented a mildly well-posed initial--boundary value problem for linearized Euler's equations~\eqref{eq:IVP_ElE} in the special case that the background flow is everywhere tangential to the boundary.
Given a solution to that initial--boundary value problem, we may define the Lagrangian displacement by~\eqref{eq:dEuRelw} such that relation~\eqref{eq:noResIndErho} holds in $\Omega$ at $t = 0$.
However, we cannot rigorously conclude that relation~\eqref{eq:noResIndErho} holds for all subsequent times and thereby that our Lagrangian displacement satisfies Galbrun's equation~\eqref{eq:G}.
The issue is that the derivation of relation~\eqref{eq:LlE2Euler} appears to require more regularity of $\pert{}u, \pert{}\rho$ and $w$ than what we obtain, at least without performing further analysis.
Regularity is also an issue for the energy estimate in Theorem~\ref{prp:enEstG} since it hinges on the existence of sufficiently regular solutions to initial--boundary value problem~\eqref{eq:IVP_G}.
We expect that resolving the regularity issue to be challenging and leave this matter open for future investigation.   
 
Although we have not performed any numerical experiments, the ideas behind our analysis could be transformed into a numerical scheme for solving Galbrun's equation. 
The resulting scheme would not be computationally attractive since the linearized Euler's equations need to be solved for the Eulerian perturbations before the Lagrangian displacement can be determined from definition~\eqref{eq:dEuRelw}.
To make things worse, it is typically not the Lagrangian displacement but rather the Eulerian perturbations that are the unknowns of interest.
Nevertheless, it would be interesting to compare such indirect numerical scheme to other more direct approaches for solving Galbrun's equation or regularized formulations of Galbrun's equation.  

\section{Acknowledgements}
The authors thank Professor Rainer Picard for helpful discussions and valuable feedback.

This work is financially supported by the Swedish Research Council (grant numbers  2013-03706 and 2018-03546), and eSSENCE, a strategic collaborative eScience program funded by the Swedish Research Council.

\appendix
\section{The Lagrangian displacement}
\label{sec:LagDisp}
In this section, we present the physical motivation behind definition~\eqref{eq:dEuRelw} of the Lagrangian displacement. 
Consider a fluid permeating all of space, where 
$u(x,t)\in\mathbb{R}^d$ gives the instantaneous fluid velocity at position $x\in\mathbb{R}^d$ and time $t>0$.
The location at time $t>0$, $X(p,t)$, of a massless fluid particle initially located at position $p\in\mathbb{R}^3$ can be found by solving the initial value problem
\begin{equation}
\begin{aligned}
\dot{X}(p,t) &= u(X(p,t),t)\enspace t>0\\
X(p,0) &= p,  
\end{aligned}
\label{eq:X}
\end{equation} 
where the derivative is with respect to time. 
In the background flow, a fluid particle initially located at position $p\in\mathbb{R}^3$ would follow the path $X_0(p,\cdot)$ given by the solution of the initial value problem   
\begin{equation}
\begin{aligned}
\dot{X}_0(p,t) &= u_0(X_0(p,t),t)\enspace t>0\\
X_0(p,0) &= p,  
\end{aligned}
\label{eq:X0}
\end{equation}
which is the analogue of problem~\eqref{eq:X}.
Recall from Section~\ref{sec:derivation} that $u$ and $u_0$ are related through $u = u_0 +\pert{}u$, where, as before, $\pert{}u$ denotes the Eulerian perturbation of the fluid velocity.
For each $p$ and all times $t\geq 0$ we define the Lagrangian displacement as the displacement of individual fluid particles 
\begin{equation}
W(p,t) = X(p,t)-X_0(p,t).
\label{eq:W}
\end{equation}
Differentiating definition~\eqref{eq:W} with respect to time and using expressions~\eqref{eq:X},~\eqref{eq:X0}, and~\eqref{eq:W}, we find that
\begin{equation}
\begin{aligned}
\dot{W}(p,t) &= u(X(p,t),t) - u_0(X_0(p,t),t)\\ 
&= u(X_0(p,t)+W(p,t),t) - u_0(X_0(p,t),t),\\
W(p,0) &= 0.
\end{aligned}
\label{eq:dtW}
\end{equation}
We assume that for each $t>0$ the mapping $X_0(\cdot,t)$ is a diffeomorphism and define the vector field $w:\mathbb{R}^d\times [0,\infty)\to \mathbb{R}^d$ such that
\begin{equation}
W(p,t) = w(X_0(p,t),t). 
\label{eq:w}
\end{equation}
That is, $w$ is the Eulerian description of the Lagrangian displacement $W$. 
We note that $\dot{W}(p,t) = (\partial_t + u_0(x,t)\cdot\nabla)w(x,t)\rvert_{x = X_0(p,t)}\equiv D_0w(x,t)\rvert_{x = X_0(p,t)}$. Hence, we may write expression~\eqref{eq:dtW} as
\begin{equation}
\begin{aligned}
D_0w(x,t)\rvert_{x = X_0(p,t)} &= \left[u(x+w(x,t),t) - u_0(x,t)\right]\rvert_{x = X_0(p,t)},\\
w(x,0) &= 0.
\end{aligned}
\label{eq:dtWEuler}
\end{equation}
Thus, to first order in $w$ and $\pert{}u$
\begin{equation}
D_0w = u + (w\cdot\nabla)u_0-u_0 = \pert{}u + (w\cdot\nabla) u_0,
\label{eq:D0w}
\end{equation} 
or equivalently
\begin{equation}
(\partial_t + \mathcal{L}_{u_0})w = \pert{}u, 
\tag{v.s. \eqref{eq:dEuRelw}}
\end{equation}
where $\mathcal{L}_{u_0}w = (u_0\cdot\nabla)w - (w\cdot\nabla)u_0 = -\mathcal{L}_wu_0$ denotes the Lie derivative of $w$ with respect to $u_0$.

\section{Identities for the Lie derivative}
\label{sec:Lie}
In the following, $u, v$ are vector fields and $p$ is a scalar field.
\begin{align}
\Lie{u}p &:= (u\cdot\nabla)p\label{eq:Lieup}\\ 
\Lie{u}v &:= (u\cdot\nabla)v - (v\cdot\nabla)u = -\Lie{v}u\label{eq:Lieuv}\\ 
\nabla\cdot\Lie{u}v &= \Lie{u}(\nabla\cdot v) - \Lie{v}(\nabla\cdot u) = (u\cdot\nabla)(\nabla\cdot v) - (v\cdot\nabla)(\nabla\cdot u)\label{eq:divLieuv}\\
\Lie{u}pv &= v\Lie{u}p + p\Lie{u}v\label{eq:LieProductScalarAndVector}
\end{align} 

\section{Integration-by-parts formula}
In the following $p$ denotes a scalar field (or a Cartesian component of a vector field).
We note that 
\begin{equation}
\frac{d}{dt}(\rho_0p,p) = (\dot{\rho}_0p,p) + 2(\rho_0\dot{p},p),
\label{eq:ip1}
\end{equation}
and
\begin{equation}
(\rho_0(u_0\cdot\nabla)p,p) = -(\nabla\cdot(\rho_0u_0)p,p) - \underbrace{(p,\rho_0(u_0\cdot\nabla)p)}_{=(\rho_0(u_0\cdot\nabla)p,p)} + (\rho_0(n\cdot u_0)p,p)_{\partial\Omega}.
\label{eq:ip2}
\end{equation}
By combining relations~\eqref{eq:ip1} and~\eqref{eq:ip2} and using mass conservation~\eqref{eq:Eu02}, we obtain
\begin{equation}
(\rho_0D_0p,p) = \frac{1}{2}\frac{d}{dt}(\rho_0p,p) + \frac{1}{2}(\rho_0(n\cdot u_0)p,p)_{\partial\Omega}. 
\label{eq:ip3}
\end{equation}


\begin{thebibliography}{10}

\bibitem{AnBu10}
Nenad Antoni{\'c} and Kre{\v{s}}imir Burazin.
\newblock Intrinsic boundary conditions for {Friedrichs} systems.
\newblock {\em Communications in Partial Differential Equations},
  35(9):1690--1715, 2010.

\bibitem{Be06}
Kamel Berriri.
\newblock {\em Approche analytique et num{\'e}rique pour l'a{\'e}roacoustique
  en r{\'e}gime transitoire par le mod{\`e}le de {Galbrun}}.
\newblock PhD thesis, ENSTA ParisTech, 2006.

\bibitem{BeBoJo06}
Kamel Berriri, A-S Dhia, and Patrick Joly.
\newblock Numerical analysis of time-dependent {Galbrun} equation in an
  infinite duct.
\newblock {\em arXiv preprint math/0603546}, 2006.

\bibitem{BoBeJo06}
Anne~Sophie Bonnet-Bendhia, Kamel Berriri, and Patrick Joly.
\newblock R{\'e}gularisation de l'{\'e}quation de {Galbrun} pour
  l'a{\'e}roacoustique en r{\'e}gime transitoire.
\newblock {\em Revue Africaine de la Recherche en Informatique et
  Math{\'e}matiques Appliqu{\'e}es}, 5:65--79, 2006.

\bibitem{Br11}
J-Ph Brazier.
\newblock Derivation of an exact energy balance for {Galbrun} equation in
  linear acoustics.
\newblock {\em Journal of Sound and Vibration}, 330(12):2848--2868, 2011.

\bibitem{BuEr16}
Kre{\v{s}}imir Burazin and Marko Erceg.
\newblock Non-stationary abstract {Friedrichs} systems.
\newblock {\em Mediterranean journal of mathematics}, 13(6):3777--3796, 2016.

\bibitem{ChDu18}
Juliette Chabassier and Marc Durufl{\'e}.
\newblock Solving time-harmonic {Galbrun's} equation with an arbitrary flow.
  {Application} to helioseismology.
\newblock Technical report, INRIA Bordeaux, 2018.

\bibitem{BoDuLeMe07}
Anne-Sophie Bonnet-Ben Dhia, Eve-Marie Duclairoir, Guillaume Legendre, and
  Jean-Fran{\c{c}}ois Mercier.
\newblock Time-harmonic acoustic propagation in the presence of a shear flow.
\newblock {\em Journal of Computational and Applied Mathematics},
  204(2):428--439, 2007.

\bibitem{BoLeLu01}
Anne-Sophie Bonnet-Ben Dhia, Guillaume Legendre, and {\'E}ric Lun{\'e}ville.
\newblock Analyse math{\'e}matique de l'{\'e}quation de {Galbrun} en
  {\'e}coulement uniforme.
\newblock {\em Comptes Rendus de l'Acad{\'e}mie des Sciences-Series
  IIB-Mechanics}, 329(8):601--606, 2001.

\bibitem{BoLeLu03}
Anne-Sophie Bonnet-Ben Dhia, Guillaume Legendre, and Eric Lun{\'e}ville.
\newblock Regularization of the time-harmonic {Galbrun’s} equations.
\newblock In {\em Mathematical and Numerical Aspects of Wave Propagation WAVES
  2003}, pages 78--83. Springer, 2003.

\bibitem{BoMeMiPe10}
AS~Bonnet-Ben Dhia, Jean-Fran{\c{c}}ois Mercier, Florence Millot, and
  S{\'e}bastien Pernet.
\newblock A low-mach number model for time-harmonic acoustics in arbitrary
  flows.
\newblock {\em Journal of Computational and Applied Mathematics},
  234(6):1868--1875, 2010.

\bibitem{BoMeMiPePe12}
AS~Bonnet-Ben Dhia, Jean-Fran{\c{c}}ois Mercier, Florence Millot, S{\'e}bastien
  Pernet, and Emilie Peynaud.
\newblock Time-harmonic acoustic scattering in a complex flow: a full coupling
  between acoustics and hydrodynamics.
\newblock {\em Communications in Computational Physics}, 11(2):555--572, 2012.

\bibitem{DySc79}
J.~Dyson and B.~F. Schutz.
\newblock Perturbations and stability of rotating stars. {I.} {Completeness} of
  normal modes.
\newblock {\em Proceedings of the Royal Society of London. Series A,
  Mathematical and Physical Sciences}, 368(1734):389--410, 1979.

\bibitem{ErGu06a}
A.~Ern and J.-L. Guermond.
\newblock Discontinuous {Galerkin} methods for {Friedrichs'} systems. {I.}
  {General} theory.
\newblock {\em SIAM Journal on Numerical Analysis}, 44(2):753--778, 2006.

\bibitem{ErGuCa07}
Alexandre Ern, Jean-Luc Guermond, and Gilbert Caplain.
\newblock An intrinsic criterion for the bijectivity of {Hilbert} operators
  related to {Friedrich'} systems.
\newblock {\em Communications in Partial Differential Equations},
  32(2):317--341, 2007.

\bibitem{Fe13}
Xue Feng.
\newblock {\em Mod{\'e}lisation num{\'e}rique par {\'e}l{\'e}ments finis d'un
  probl{\`e}me a{\'e}roacoustique en r{\'e}gime transitoire: application {\`a}
  l'{\'e}quation de {Galbrun}}.
\newblock PhD thesis, Universit{\'e} de Technologie de Compi{\`e}gne, 2013.

\bibitem{FeBeBa16}
Xue Feng, Mabrouk Ben~Tahar, and Ryan Baccouche.
\newblock The aero-acoustic {Galbrun} equation in the time domain with
  perfectly matched layer boundary conditions.
\newblock {\em The Journal of the Acoustical Society of America},
  139(1):320--331, 2016.

\bibitem{FrSc78b}
John~L Friedman and Bernard~F Schutz.
\newblock Secular instability of rotating {Newtonian} stars.
\newblock {\em Astrophysical Journal}, 222:281--296, 1978.

\bibitem{FrRo60}
E.~Frieman and M.~Rotenberg.
\newblock On hydromagnetic stability of stationary equilibria.
\newblock {\em Rev. Mod. Phys.}, 32:898--902, Oct 1960.

\bibitem{Ga03}
Gw{\'e}na{\"e}l Gabard.
\newblock {\em M{\'e}thodes num{\'e}riques et mod{\`e}les de sources
  a{\'e}roacoustiques fond{\'e}s sur l’{\'e}quation de {Galbrun}}.
\newblock PhD thesis, Universit{\'e} de Technologie de Compiegne, 2003.

\bibitem{Ga31}
Henri Galbrun.
\newblock {\em Propagation d'une onde sonore dans l'atmosphère et th{\'e}orie
  des zones de silence}.
\newblock Gauthier-Villars, Paris, 1931.

\bibitem{Go97}
Oleg~A. Godin.
\newblock Reciprocity and energy theorems for waves in a compressible
  inhomogeneous moving fluid.
\newblock {\em Wave Motion}, 25(2):143 -- 167, 1997.

\bibitem{Ha70}
V. D.~Hayes
\newblock Conservation of action and modal wave action.
\newblock {\em Proceedings of the Royal Society of London A: Mathematical,
  Physical and Engineering Sciences}, 320(1541):187--208, 1970.

\bibitem{KrLo89}
Heinz-Otto Kreiss and Jens Lorenz.
\newblock {\em Initial-boundary value problems and the Navier-Stokes
  equations}.
\newblock Academic Press, Boston, 1989.

\bibitem{Le03}
Guillaume Legendre.
\newblock {\em Rayonnement acoustique dans un fluide en {\'e}coulement: analyse
  math{\'e}matique et num{\'e}rique de l'{\'e}quation de {Galbrun}}.
\newblock PhD thesis, ENSTA ParisTech, 2003.

\bibitem{LyOs67}
D.~Lynden-Bell and J.~P. Ostriker.
\newblock On the stability of differentially rotating bodies.
\newblock {\em Monthly Notices of the Royal Astronomical Society},
  136(3):293--310, 1967.

\bibitem{MeBoMiPePe12}
Jean~Francois Mercier, Anne-Sophie Bonnet-Ben Dhia, Florence Millot, Sebastien
  Pernet, and Emilie Peynaud.
\newblock Time-harmonic acoustic scattering in a complex flow.
\newblock In {\em Acoustics 2012}, 2012.

\bibitem{Mo03}
Peter Monk.
\newblock {\em Finite element methods for {Maxwell's} equations}.
\newblock Clarendon, Oxford, 2003.

\bibitem{Ra85}
Jeffrey Rauch.
\newblock Symmetric positive systems with boundary characteristic of constant
  multiplicity.
\newblock {\em Transactions of the American Mathematical Society},
  291(1):167--187, 1985.

\bibitem{FrSc78a}
Bernard~F Schutz and John~L Friedman.
\newblock Langrangian perturbation theory of nonrelativistic fluids.
\newblock {\em The Astrophysical Journal}, 221:937--957, 1978.

\bibitem{TrGaBe03}
Fabien Treyssède, Gwénaël Gabard, and Mabrouk Ben~Tahar.
\newblock A mixed finite element method for acoustic wave propagation in moving
  fluids based on an {Eulerian--Lagrangian} description.
\newblock {\em The Journal of the Acoustical Society of America},
  113(2):705--716, 2003.

\end{thebibliography}

\end{document}